\newcommand{\e}{\varepsilon}
\newcommand{\U}{\mathcal U}
\newcommand{\V}{\mathcal V}
\newcommand{\dist}{\mathrm{dist}\,}
\newcommand{\diam}{\mathrm{diam}\,}
\newcommand{\IN}{\mathbb N}
\newcommand{\IZ}{\mathbb Z}
\newcommand{\IQ}{\mathbb Q}
\newcommand{\IR}{\mathbb R}
\newcommand{\Ra}{\Rightarrow}
\newcommand{\w}{\omega}
\newcommand{\lev}{\mathrm{lev}}
\newcommand{\Deg}{\mathrm{Deg}}
\newcommand{\mesh}{\mathrm{mesh}\,}
\newcommand{\C}{\mathcal C}
\newcommand{\A}{\mathcal A}
\newcommand{\suc}{\mathrm{pred}}
\newcommand{\Ent}{\mathrm{Ent}}
\newcommand{\ent}{\mathrm{ent}}
\newcommand{\id}{\mathrm{id}}
\newcommand{\next}{\mathrm{next}}
\newtheorem{theorem}{Theorem}
\newtheorem{corollary}{Corollary}
\newtheorem{lemma}{Lemma}
\newtheorem{proposition}{Proposition}
\newtheorem{problem}{Problem}
\theoremstyle{definition}
\newtheorem{definition}{Definition}
\title{The coarse classification of homogeneous ultra-metric spaces}
\author{Taras Banakh, Ihor Zarichnyy}
\address{Instytut Matematyki, Akademia \' Swi\c etokrzyska w Kielcach (Poland),\newline Department of Mathematics, Ivan Franko National University of Lviv (Ukraine)}
\email{tbanakh@yahoo.com}
\email{ihor.zarichnyj@gmail.com}
\subjclass{54E35, 54E40}
\begin{document}

\begin{abstract} We prove that two homogeneous ultra-metric spaces $X,Y$ are coarsely equivalent if and only if $\Ent^\sharp(X)=\Ent^\sharp(Y)$ where $\Ent^\sharp(X)$ is the so-called sharp entropy of $X$. This classification implies that each  homogeneous proper ultra-metric space is  coarsely equivalent to the anti-Cantor set $2^{<\w}$. For the proof of these results we develop a technique of towers which can have an independent interest.
\end{abstract}

\maketitle

\tableofcontents

\section*{Introduction}
In this paper we classify homogeneous ultra-metric spaces up to the coarse equivalence.

Let us recall some necessary definitions. We say that a metric space $(X,d)$ is
\begin{itemize}
\item {\em homogeneous} if for any two points $x,y\in X$ there is an isometrical bijection $f:X\to X$ with $f(x)=y$;
\item {\em proper} if $X$ is unbounded but for every $x_0\in X$ and $r\in[0,+\infty)$ the closed $r$-ball $B_r(x_0)=\{x\in X:d(x,x_0)\le r\}$ centered at $x_0$ is compact;
\item an {\em ultra-metric space} if $d(x,y)\le\max\{d(x,z),d(z,y)\}$ for every points $x,y,z\in X$.
\end{itemize}

The basic example of a homogeneous proper ultra-metric space 
is the space $$2^{<\w}=\{(x_i)_{i\in\w}\in 2^\w:\exists m\in\w\;\forall i\ge m\;\; x_i=0\}$$endowed with the ultrametric 
$$D(\vec x,\vec y)=
\max_{n\in\w} 2^n |x_n-y_n|,$$
where $\vec x=(x_n)_{n\in\w}$ and $\vec y=(y_n)_{n\in\w}$ are two points of $2^{<\w}$. Here $2=\{0,1\}$ and more generally, $\alpha=\{\beta:\beta<\alpha\}$ for any ordinal $\alpha$.

The ultra-metric space $2^{<\w}$, called the {\em anti-Cantor} set, is an asymptotic counterpart of the Cantor cube $2^\w$ endowed with the  ultrametric $$d(\vec x,\vec y)=\max_{n\in\w}2^{-n}|x_n-y_n|$$

By analogy, for every set $A$ with $|A|>1$ we can consider the countable product $(A^\w,d)$ and its asymptotic counterpart $(A^{<\w},D)$. According to the classical 
Brouwer theorem for each finite set $A$ with $|A|>1$ the countable  product $A^\w$ is (uniformly) homeomorphic to the Cantor cube $2^\w$.

The problem if the Brouwer theorem has an asymptotic counterpart has been circulated among asymptologists (see \cite[\S5]{BDHM}) and was communicated to the authors by Ihor Protasov. To answer this question we first need to recall the notion of the coarse equivalence, which relies on the notion of a bornologous map. By definition, a map $f:X\to Y$ between metric spaces is {\em bornologous} if for every $\e\in\IR$ there is $\delta\in\IR$ such that for each points $x,x'\in X$ with $\dist(x,x')\le\e$ we get $\dist(f(x),f(x'))\le\delta$.

\begin{definition} We say that two metric spaces $X,Y$ are 
\begin{itemize}
\item {\em bijectively asymorphic} if there is a bornologous bijective map $f:X\to Y$ with bornologous inverse $f^{-1}$;
\item {\em coarsely equivalent} if there are bornologous maps $f:X\to Y$ and $g:Y\to X$ such that $\dist(g\circ f,\id_X)<\infty$ and $\dist(f\circ g,\id_Y)<\infty$.
\end{itemize}
\end{definition}
In Section~\ref{sa} we shall give several equivalent definitions of the coarse equivalence.

It is known  that for two finite sets $A,B$ the metric spaces $A^{<\w}$ and $B^{<\w}$ are bijectively asymorphic if and only if $|A|$ and $|B|$ have the same prime divisors, see \cite[10.6]{PB}, \cite[p.57]{PZ} or \cite[5.5]{BDHM}. In particular, $2^{<\w}$ and $3^{<\w}$ are not bijectively asymorphic. In light of this result, it is natural to ask if $2^{<\w}$ and $3^{<\w}$ are coarse equivalent, see \cite[\S5]{BDHM}. The positive answer to this question can be  easily derived from the homogeneity of $2^{<\w}$ and $3^{<\w}$ and the following theorem (that is a particular case of a more general  Theorem~\ref{ast} below). 

\begin{theorem}\label{t1} Any homogeneous proper ultra-metric space is coarsely equivalent to the anti-Cantor set $2^{<\w}$.
\end{theorem}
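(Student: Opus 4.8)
The plan is to prove the statement directly, viewing it as the proper-space specialisation of the tower machinery: by Theorem~\ref{ast} it would suffice to check that every homogeneous proper ultra-metric space $X$ has the same sharp entropy as $2^{<\w}$, but I will instead build the coarse equivalence by hand, both to expose where properness enters and to isolate the single hard step. Throughout, the key feature of the ultra-metric coarse structure is that $\dist(g\circ f,\id)<\infty$ only demands that $g(f(x))$ lie in the same ball of some fixed radius as $x$.

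First I would pass to a uniformly discrete net. Since $X$ is proper and ultra-metric, a compactness argument shows that inside any closed ball the distance set can accumulate only at $0$: if distinct distances accumulated at a value $s>0$ within $B_R(x_0)$, the witnessing points would be pairwise at distance $\ge s$, contradicting compactness of $B_R(x_0)$. Fixing any $r>0$ above this local scale, the closed balls of radius $r$ partition $X$ into compact clopen pieces that are pairwise isometric by homogeneity; choosing one point from each piece yields a set $Z\subseteq X$ that is $r$-dense and $r$-separated. The inclusion $Z\hookrightarrow X$ together with a nearest-point retraction $X\to Z$ then witnesses a coarse equivalence $X\simeq Z$, both compositions displacing points by at most $r$. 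In particular $Z$ is uniformly discrete and its distance set $\{t_0<t_1<\cdots\}$ is discrete and tends to infinity.

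Next I would record the tree (tower) structure of $Z$. For each $n$ the radius-$t_n$ balls partition $Z$, these partitions grow coarser in $n$, and homogeneity of $X$ forces every radius-$t_n$ ball to split into the same number $b_n\ge 2$ of radius-$t_{n-1}$ balls; thus $Z$ is coded by the branching sequence $(b_n)$ together with the radii $(t_n)$. An easy first reduction is that the radii are irrelevant to the coarse type: any tree-isomorphism replacing $(t_n)$ by another cofinal increasing sequence (say $t_n=2^n$) is a bornologous bijection with bornologous inverse, since it sends each radius-$t_n$ ball to a ball of the new level-$n$ radius and so controls distances levelwise in both directions. Hence $Z$, and with it $X$, is determined up to coarse equivalence by the combinatorial branching sequence $(b_n)$, and it remains to normalise this sequence to the constant sequence $(2,2,\dots)$ realised by $2^{<\w}$.

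The heart of the argument, and the step I expect to be the main obstacle, is to show that any two homogeneous trees with all branchings $\ge 2$ yield coarsely equivalent spaces. Because a $b_n$-fold branching is in general not a power of $2$, no bijection can respect the tree structure exactly—indeed $2^{<\w}$ and $3^{<\w}$ are already not bijectively asymorphic—so the maps must exploit the bounded slack of coarse equivalence. I would build $f:Z\to 2^{<\w}$ and $g:2^{<\w}\to Z$ by choosing a level correspondence $n\mapsto m(n)$ that matches ball cardinalities up to a bounded factor, taking $m(n)\approx\log_2(b_1\cdots b_n)$, and then defining, coherently across all levels, injections of each radius-$t_n$ ball of $Z$ into a radius-$2^{m(n)}$ ball of $2^{<\w}$ that commute with the tree projections. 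The delicate point is coherence: the matchings at successive levels must agree after projection, so that $f\circ g$ and $g\circ f$ return every point to its original fixed-radius ball and hence have finite displacement. This coherent interleaving—packing a binary subtree between consecutive base-$b$ levels and conversely, which is possible precisely because every branching is at least $2$—is exactly what the tower technique of the earlier sections is designed to deliver, and it produces the required bornologous maps, completing the coarse equivalence $X\simeq 2^{<\w}$.
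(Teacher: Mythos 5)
Your overall architecture is the paper's own: extracting the realized-distance sequence $t_0<t_1<\cdots$ from a compact ball and using homogeneity to get a uniform finite branching number $b_n\ge 2$ at each level is exactly the construction of the canonic $\vec r$-tower of Proposition~\ref{hom}, and your remark that the radii may be renormalized to $2^n$ is the passage to the path metric (Proposition~\ref{next}). Up to that point the argument is correct, and you have accurately located where properness (finiteness of $b_n$) and homogeneity (independence of $b_n$ from the ball) enter.

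The gap is that the step you yourself call ``the main obstacle'' is described but not proved. Choosing $m(n)\approx\log_2(b_1\cdots b_n)$ and then ``defining, coherently across all levels, injections that commute with the tree projections'' is precisely where all the work lies: with the natural choice $m(n)=\lceil\log_2(b_1\cdots b_n)\rceil$ one only gets $b_{n+1}/2<2^{m(n+1)-m(n)}<2b_{n+1}$, so the $b_{n+1}$ children of a level-$(n+1)$ ball of $Z$ need not inject one-to-one into the $2^{m(n+1)-m(n)}$ children of the corresponding binary ball, and the obvious repair of doubling children up overflows the cardinality $2^{m(n)}$ of the target child and destroys injectivity one level down. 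Resolving this is exactly the content of Lemmas~\ref{l3x} and~\ref{l2}, which introduce the auxiliary sequences $(a_i),(b_i)$ and repartition each set of children into admissible blocks of controlled size instead of matching children to children; your text replaces that argument by the statement that it ``is exactly what the tower technique of the earlier sections is designed to deliver,'' so nothing is actually established at the decisive point. Note also that the short correct route, which you mention and then set aside, is not ``check that $X$ has the same sharp entropy as $2^{<\w}$'': the hypothesis of Theorem~\ref{ast} is convergence of the product of the ratios $\Ent_{r_n}^{r_{n+1}}(X)/\ent_{r_n}^{r_{n+1}}(X)$, and for a homogeneous proper ultra-metric space every factor equals $1$ because $\Ent_{r_n}(B_{r_{n+1}}(x))$ is finite and independent of $x$; that single observation is the paper's entire proof of the theorem.
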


According to \cite[2.42]{Roe}, any two coarsely equivalent proper metric spaces $X,Y$ have homeomorphic Higson coronas $\nu X$, $\nu Y$. Combining this fact with Theorem~\ref{t1}, we get 

\begin{corollary}\label{c2} The Higson coronas $\nu X,\nu Y$ of any two homogeneous  proper ultra-metric spaces $X,Y$ are homeomorphic.
\end{corollary}

Theorem~\ref{t1} follows from a more general result detecting ultra-metric spaces 
coarsely equivalent to the Cantor set with helf of cardinal invariants called small and large entropies. Given a subset $B$ of a metric space $X$ and a real number $\e$ we define the {\em $\e$-entropy} $\Ent_\e(B)$ of $B$ as the smallest cardinality $|N|$ of an $\e$-net $N\subset B$ (the latter means that for each point $x\in B$ there is a point $y\in N$ with $\dist(x,y)<\e$). 
For $\e,\delta\in[0,\infty)$ let 
$$
\Ent^\delta_\e(X)=\sup_{x\in X}\Ent_\e(B_\delta(x))\;\;\mbox{ and }\;\;
\ent^\delta_\e(X)=\min_{x\in X}\Ent_\e(B_\delta(x))$$
where $B_\delta(x)=\{y\in X:\dist(x,y)\le\delta\}$ stands for the closed $\delta$-ball centered at $x$. A metric space $X$ is defined to have {\em bounded geometry} if there is $\e\in\IR$ such that $\Ent_\e^\delta(X)<\aleph_0$ for all $\delta\in\IR$.
For such spaces we have the following theorem implying 
Theorem~\ref{t1}.

\begin{theorem}\label{ast} A proper ultra-metric space $X$ is coarsely equivalent to the anti-Cantor set provided there is an increasing unbounded number sequence $\vec r=\{r_n\}_{n\in\IN}$ such that
$$\prod_{n\in\IN} \frac{\Ent_{r_n}^{r_{n+1}}(X)}{\ent_{r_n}^{r_{n+1}}(X)}<+\infty.$$
\end{theorem}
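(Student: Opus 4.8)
The plan is to encode both $X$ and the anti-Cantor set $2^{<\w}$ by the combinatorial trees of their nested ball-partitions at the scales $r_n$, and then to build a bijection between these trees that distorts the tree-levels by a uniformly bounded amount; the finiteness of $\prod_n \Ent_{r_n}^{r_{n+1}}(X)/\ent_{r_n}^{r_{n+1}}(X)$ is exactly what will guarantee that bound.

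First I would exploit the ultra-metric inequality: for each fixed $n$ the relation $\dist(x,y)\le r_n$ is an equivalence relation, so $X$ is partitioned into $r_n$-balls, and each $r_n$-ball lies in a unique $r_{n+1}$-ball. Properness makes every $r_{n+1}$-ball compact, hence a finite union of $r_n$-balls, whose number (the branching) is an entropy $\Ent_{r_n}(B_{r_{n+1}}(x))$ lying between $\ent_{r_n}^{r_{n+1}}(X)$ and $\Ent_{r_n}^{r_{n+1}}(X)$. This realizes $X$ as the leaf-net of a rooted-at-infinity tree $T_X$ whose level-$n$ vertices are the $r_n$-balls and whose parent map is inclusion; choosing one point in each $r_1$-ball gives a coarsely dense net, so $X$ is coarsely equivalent to this net. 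The same construction applied to $2^{<\w}$ yields the uniform binary tree $T_2$, in which every vertex has exactly two children. Along the way I would prune the redundant scales (where $\ent_{r_n}^{r_{n+1}}=\Ent_{r_n}^{r_{n+1}}=1$) and, using the convergence of the product, assume $\ent_{r_n}^{r_{n+1}}(X)\ge 2$ for all $n$, at the cost of only a bounded modification.

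Next I would reduce coarse equivalence to a purely tree-combinatorial statement. Each of the two nets is coarsely equivalent to its leaf-set equipped with the \emph{level metric}, in which the distance of two leaves is recorded by the first level at which their cells coincide; this level metric is coarsely equivalent to the original ultra-metric regardless of how fast $\vec r$ grows. In this language a level-preserving bijection of leaves that distorts the coincidence level by at most a constant $C$ is automatically a bijective coarse equivalence, since it converts ``coincidence level $\le n$'' into ``coincidence level $\le n+C$'' in both directions, giving bornologous maps whose composites are the identity. Hence the whole theorem comes down to producing such a bijection $\phi:T_X\to T_2$.

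The heart of the argument, and the step I expect to be the main obstacle, is the homogenisation of the branching. A level-$n$ vertex of $T_X$ carries a number of leaves equal to the product $\prod b_k$ of the actual branchings $b_k\in[\ent_{r_k}^{r_{k+1}}(X),\Ent_{r_k}^{r_{k+1}}(X)]$ along its descent, whereas a level-$m$ vertex of $T_2$ carries $2^{m}$ leaves. The spread of $\log_2(\text{leaf count})$ over all level-$n$ vertices of $T_X$ is therefore at most $\sum_{k<n}\bigl(\log_2\Ent_{r_k}^{r_{k+1}}(X)-\log_2\ent_{r_k}^{r_{k+1}}(X)\bigr)=\log_2\prod_{k<n}\frac{\Ent_{r_k}^{r_{k+1}}(X)}{\ent_{r_k}^{r_{k+1}}(X)}\le C:=\log_2\prod_{k}\frac{\Ent_{r_k}^{r_{k+1}}(X)}{\ent_{r_k}^{r_{k+1}}(X)}<\infty$. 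Thus, although $T_X$ is not level-uniform, all level-$n$ vertices have leaf counts within the fixed factor $2^{C}$ of one another, so each may be assigned to a $T_2$-level inside a window of width $C$ independent of $n$. I would then construct $\phi$ inductively from the coarse end downward, maintaining a parent-respecting matching of subtrees and absorbing the bounded discrepancy between $\prod b_k$ and the nearest power of $2$ within this window; the telescoping of the product is precisely what keeps the accumulated distortion from ever exceeding $C$. The delicate bookkeeping is to keep the matching simultaneously compatible with the parent maps of both trees across the infinitely many vertices of each level while never letting the window grow, and it is this uniform control, rather than the existence of a matching at any single level, for which the hypothesis on the product is indispensable.
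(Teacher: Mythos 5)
Your reduction of both spaces to the trees of nested ball-partitions, the passage to the level (path) metric, and the estimate that the spread of $\log_2$ of the leaf counts over any fixed level is bounded by $\log_2\prod_k\Ent_{r_k}^{r_{k+1}}(X)/\ent_{r_k}^{r_{k+1}}(X)$ are all correct and coincide with the paper's setup (the canonical $\vec r$-tower of Proposition~\ref{hom}). The fatal gap is in the step you yourself call the heart of the argument: the level-distortion-bounded \emph{bijection} of leaves that you propose to construct does not exist in general, so no bookkeeping can produce it. Take for $X$ the base of the ternary tower $T_{\vec 3}$ (every vertex has exactly $3$ children); this is a proper homogeneous ultra-metric space satisfying the hypothesis trivially, since every ratio $\Ent_{r_n}^{r_{n+1}}/\ent_{r_n}^{r_{n+1}}$ equals $1$. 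Suppose $\phi$ were a bornologous bijection from its leaf set onto $2^{<\w}$. Bornologousness forces each level-$n$ cell of $T_{\vec 3}$ (which has exactly $3^{n-1}$ leaves) to land inside a single level-$M(n)$ cell of the binary tree for some finite $M(n)$. Since every leaf lies in some level-$n$ cell and $\phi$ is a bijection, each level-$M(n)$ cell of the binary tree is then the disjoint union of the \emph{full} images of the level-$n$ cells mapping into it, whence $3^{n-1}$ divides $2^{M(n)-1}$ --- impossible for $n\ge 2$. This is precisely the divisibility obstruction, recalled in the Introduction, by which $2^{<\w}$ and $3^{<\w}$ fail to be bijectively asymorphic even though they are coarsely equivalent; your strategy, if it succeeded, would prove the false bijective statement.

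The repair is to abandon bijectivity, and that is where the paper's real work lies. It constructs an admissible morphism of towers (Lemma~\ref{l2}), a level-preserving surjection which on leaves is allowed to be many-to-one but collapses only \emph{siblings} (points at distance $2$), and Lemma~\ref{l1} shows such a map is still an asymorphism. Concretely, the children of each vertex of $T_X(\vec r)$ are partitioned into blocks whose sizes lie in a window $[a_i,b_i]$, and each block is sent to one child of the image vertex; the convergence of the product is spent in Lemma~\ref{l3x} on choosing the re-indexings $(n_i)$, $(m_i)$ of the two level scales and the windows $[a_i,b_i]$ so that the required block decompositions exist at every vertex simultaneously --- this is the rigorous version of your ``window of width $C$''. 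If you insist on an honest bijection, Proposition~\ref{ascors}(3) tells you that you must settle for bijectively asymorphic \emph{large subsets} of the two spaces, which changes the counting and removes the divisibility constraint; but that is a different construction from the one you outline.
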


Theorem~\ref{t1} is the principal ingredient in the coarse classification of homogeneous
ultra-metric spaces. Such spaces are classified with help of a cardinal invariant called the {\em sharp entropy}. To define this cardinal invariant, for a metric space $X$ and a real number $\e$ let 
$$\Ent^\sharp_\e(X)=\sup_{\delta<\infty}\big(\Ent^\delta_\e(X)\big)^+\;\;\mbox{ and }\;\;
\ent^\sharp_\e(X)=\sup_{\delta<\infty}\big(\ent^\delta_\e(X)\big)^+$$be the {\em large} and {\em small} $\e$-{\em entropies} of $X$ (here by $\kappa^+$ we denote the successor cardinal to a cardinal $\kappa$). The cardinal numbers
$$\Ent^\sharp(X)=\min_{\e<\infty}\Ent^\sharp_\e(X)\;\;\mbox{ and }\;\;
\ent^\sharp(X)=\min_{\e<\infty}\ent^\sharp_\e(X)$$are called  the {\em large} and {\em small sharp entropies} of $X$, respectively.

It is clear that $\ent^\sharp(X)\le\Ent^\sharp(X)$ for any metric space $X$. If $X$ is homogeneous, then we have the equality $\ent^\sharp(X)=\Ent^\sharp(X)$ (because $\Ent_\e(B_\delta(x))=\Ent_\e^\delta(B_\delta(y))$ for all $\e,\delta$ and $x,y\in X$).

It follows that $\Ent^\sharp(X)\le\aleph_0$ if and only if there is $\e>0$ such that 
$\Ent_\e^\delta(X)<\aleph_0$ for all $\delta\in\IR$, which means that $X$ has bounded geometry. 

Observe that the sharp entropy distinguishes between the anti-Cantor set $2^{<\w}$ and the anti-Baire space $\IN^{<\w}$ because $\Ent^\sharp(2^{<\w})=\aleph_0$ while $\Ent^\sharp(\IN^{<\w})=\aleph_1$. 

The following classification theorem (implying Theorem~\ref{t1}) is one of the main result of this paper.

\begin{theorem}\label{class} Two homogeneous ultra-metric spaces are coarsely equivalent if and only if $\Ent^\sharp(X)=\Ent^\sharp(Y)$.
\end{theorem}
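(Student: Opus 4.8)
The plan is to prove the two implications separately. For the ``only if'' part I would show that the sharp entropy is a coarse invariant of (arbitrary) metric spaces, so it cannot distinguish coarsely equivalent spaces; for the ``if'' part I would use the tower technique together with Theorems~\ref{t1} and~\ref{ast} to prove that any homogeneous ultra-metric space with a prescribed value of $\Ent^\sharp$ is coarsely equivalent to a canonical model space depending only on that value.

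For necessity, fix a coarse equivalence given by bornologous maps $f:X\to Y$ and $g:Y\to X$ with $\dist(g\circ f,\id_X)\le C$ and $\dist(f\circ g,\id_Y)\le C$ for some finite $C$. Let $\omega_f,\omega_g$ be control functions witnessing bornologousness, so $\dist(f(x),f(x'))\le\omega_f(\dist(x,x'))$ and similarly for $g$. The key observation is that $f$ maps each ball $B_\delta(x)$ into the ball $B_{\omega_f(\delta)}(f(x))$, and that the $f$-image of an $\e$-net of $B_\delta(x)$ is an $\omega_f(\e)$-net of $f(B_\delta(x))$; conversely, pulling an $\e$-net back through $g$ and correcting by $C$ yields a net of comparable cardinality in $X$. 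Chasing these estimates shows that for every $\e$ there are $\e',\delta'$ depending only on $\e,C,\omega_f,\omega_g$ with $\Ent_{\e'}^{\delta}(X)\le\Ent_{\e}^{\delta'}(Y)$, and symmetrically. Taking successor cardinals, then the supremum over $\delta$ and finally the minimum over $\e$ converts these inequalities into the equality $\Ent^\sharp(X)=\Ent^\sharp(Y)$. This direction uses neither the ultra-metric nor the homogeneity hypothesis and I expect it to be routine bookkeeping with the control functions.

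For sufficiency, set $\kappa=\Ent^\sharp(X)=\Ent^\sharp(Y)$ and reduce everything to a canonical homogeneous ultra-metric model $M_\kappa$ whose ball tree has uniform branching dictated by $\kappa$ (for instance $M_{\aleph_0}=2^{<\w}$ and, for an infinite cardinal $\lambda$, $M_{\lambda^+}=A^{<\w}$ with $|A|=\lambda$). Homogeneity gives $\ent^\delta_\e(X)=\Ent^\delta_\e(X)$ for all $\e,\delta$, so the ball tree of $X$ has the same branching at every vertex of a given level, and the role of $\Ent^\sharp$ is exactly to record the eventual supremum of these level-branching cardinals after discarding a bounded initial part (which is what the minimum over $\e$ achieves). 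In the bounded-geometry case $\kappa=\aleph_0$ with $X$ proper, the equality $\ent=\Ent$ makes every ratio $\Ent^{r_{n+1}}_{r_n}(X)/\ent^{r_{n+1}}_{r_n}(X)$ equal to $1$, so the product in Theorem~\ref{ast} is finite and $X$ is coarsely equivalent to $2^{<\w}=M_{\aleph_0}$; the same holds for $Y$, and coarse equivalence is transitive. In the remaining cases I would construct the coarse equivalence $X\to M_\kappa$ directly from the tower technique, building bornologous maps level by level: choosing an increasing cofinal sequence of scales, selecting nets that realize the branching at each scale, and matching the resulting towers over $X$ with the towers over $M_\kappa$, using that at large levels both spaces have branching of the same cardinality to define the correspondence and controlling the induced maps by the mesh of the tower.

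The main obstacle is precisely this tower-matching step in the sufficiency direction when $X$ need not be proper or when $\kappa>\aleph_0$, since there Theorems~\ref{t1} and~\ref{ast} do not apply and the coarse equivalence must be produced by hand. Two subtleties deserve care. First, the definition of $\Ent^\sharp$ as a supremum of successor cardinals over $\delta$ followed by a minimum over $\e$ means that a single value of $\kappa$ may be realized by genuinely different branching sequences, so $M_\kappa$ must be chosen to absorb all of them up to coarse equivalence. Second, at each level of the tower one must verify that the chosen nets can be taken uniformly (using homogeneity) and that the level-to-level transition maps glue into globally bornologous maps with bornologous coarse inverses. Once the tower-matching is arranged, finiteness of the mesh-growth yields the required bounds $\dist(g\circ f,\id_X)<\infty$ and $\dist(f\circ g,\id_Y)<\infty$, completing the proof.
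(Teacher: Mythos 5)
Your overall strategy coincides with the paper's: the ``only if'' direction is the coarse invariance of $\Ent^\sharp$ (Proposition~\ref{ent}(2), proved by exactly the net-transfer bookkeeping you describe), and the ``if'' direction splits by the value of $\kappa=\Ent^\sharp(X)$, with $\kappa\le\aleph_0$ handled via Theorem~\ref{ast} and the successor case $\kappa=\mu^+$ handled by normalizing the scales so that $\Ent_{r_n}^{r_{n+1}}(X)=\mu=\Ent_{r_n}^{r_{n+1}}(Y)$ for all $n$ and invoking the tower-isomorphism criterion (Corollary~\ref{p5}); your model $M_{\lambda^+}=A^{<\w}$ with $|A|=\lambda$ amounts to the same comparison. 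In the successor case you should still record why constant branching is achievable: since $\Ent^\sharp_\e(X)=\mu^+$ is a successor, the supremum $\sup_\delta(\Ent_\e^\delta(X))^+$ is attained, so $\Ent_\e^\delta(X)=\mu$ for all sufficiently large $\delta$ and every level of the canonical tower has degree exactly $\mu$ after discarding small scales.

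The genuine gap is the case where $\kappa$ is an uncountable limit cardinal (necessarily of countable cofinality, by Proposition~\ref{ent}(4)), which you defer to an unspecified ``tower-matching step.'' The difficulty you correctly flag --- that $X$ and $Y$ may realize $\kappa$ through genuinely different increasing sequences of branching cardinals $(\kappa_n)$ and $(\mu_n)$ with $\sup_n\kappa_n=\sup_n\mu_n=\kappa$ --- cannot be resolved by fixing one canonical branching sequence for $M_\kappa$ and applying Corollary~\ref{p5}, because tower isomorphism requires the degree sequences to agree levelwise. The paper's resolution is concrete: realize the base of the canonical tower of $X$ as the rational vector space $\IQ(\kappa)$ with Hamel basis $\kappa$, metrized by $d_1(x,y)=2\max\{n\in\IN:x-y\notin\IQ(\kappa_n)\}$, and similarly realize $Y$ via $d_2$ built from $(\mu_n)$; since the two increasing sequences of infinite cardinals interleave (each $\kappa_n$ lies below some $\mu_m$ and vice versa), the filtrations $(\IQ(\kappa_n))_{n\in\IN}$ and $(\IQ(\mu_n))_{n\in\IN}$ are mutually cofinal and the identity map $(\IQ(\kappa),d_1)\to(\IQ(\kappa),d_2)$ is a bijective asymorphism. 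Without this interleaving argument (or an equivalent re-indexing of level subtowers) your plan does not close, so you must supply it to complete the proof.
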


The following proposition completes Theorem~\ref{class} and presents some elementary properties of the sharp entropies.

\begin{proposition}\label{ent} \begin{enumerate}
\item If a metric space $X$ is coarsely equivalent to a subspace of a metric space $Y$, then $\Ent^\sharp(X)\le\Ent^\sharp(Y)$.
\item If two metric spaces $X,Y$ are coarsely equivalent, then $\Ent^\sharp(X)=\Ent^\sharp(Y)$ and $\ent^\sharp(X)=\ent^\sharp(Y)$.
\item An ultra-metric space $X$ is coarsely equivalent to a subspace of an ultra-metric space $Y$ provided $\Ent^\sharp(X)\le\ent^\sharp(Y)$.
\item For a cardinal number $\kappa$ there is a non-empty (proper homogeneous ultra-) metric space $X$ with $\Ent^\sharp(X)=\kappa$ if and only if either $\kappa=2$ or $\kappa$ is an infinite successor cardinal, or $\kappa$ is a limit cardinal of countable cofinality.
\end{enumerate}
\end{proposition}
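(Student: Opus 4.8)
The plan is to treat the four items essentially independently, deriving (1) and (2) from a single modulus‑chasing computation, handling (3) with the tower machinery announced in the abstract, and settling (4) by a cardinal‑arithmetic dichotomy together with explicit realizing examples.

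For the coarse‑invariance items (1) and (2) I would isolate two elementary facts. First, subspace monotonicity: for $A\subseteq B$ in any metric space, an $\e/2$‑net of $B$ yields an $\e$‑net of $A$ of no larger cardinality (pick, for each net point meeting $A$, a witness in $A$), so for $X\subseteq Y$ one gets $\Ent_\e(B_\delta^X(x))\le\Ent_{\e/2}(B_\delta^Y(x))\le\Ent^\delta_{\e/2}(Y)$; taking suprema and successors gives $\Ent^\sharp_\e(X)\le\Ent^\sharp_{\e/2}(Y)$, and reparametrising the $\min$ over $\e$ by $\e\mapsto\e/2$ yields $\Ent^\sharp(X)\le\Ent^\sharp(Y)$. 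Second, coarse invariance: given a coarse equivalence with bornologous $f\colon X\to Y$, $g\colon Y\to X$ and $\dist(g\circ f,\id_X)\le C$, I would push a maximal $\e$‑separated set $N\subseteq B_\delta^X(x_0)$ forward by $f$. The bornologous modulus of $f$ places the image in a ball $B^Y_{\delta'}(f(x_0))$ with $\delta'$ depending only on $\delta$, and the bornologous modulus of $g$ together with the bound $C$ shows that for $\e>2C$ the set $f(N)$ is $\e'$‑separated for some $\e'=\e'(\e)\to\infty$. Comparing separated sets with entropy (an equality in the ultrametric case) gives $\Ent^\delta_\e(X)\le\Ent^{\delta'}_{\e'}(Y)$, hence $\Ent^\sharp_\e(X)\le\Ent^\sharp_{\e'}(Y)$; since each $\Ent^\sharp_\e$ is non‑increasing and cardinal‑valued, it is eventually constant in $\e$, so $\Ent^\sharp(X)\le\Ent^\sharp(Y)$, and symmetry finishes (2) for $\Ent^\sharp$. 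The same computation with $\min$ over base points in place of $\sup$ (using that $f,g$ are coboundedly inverse, so base points correspond up to bounded error) handles $\ent^\sharp$. Item (1) then follows by composing subspace monotonicity with coarse invariance of the first factor, via $X\simeq Z\subseteq Y$.

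For item (3) I would use the tower technique. The hypothesis $\Ent^\sharp(X)\le\ent^\sharp(Y)$ says there is a scale $\e_X$ at which every ball of $X$ has $\e_X$‑entropy strictly below $\Ent^\sharp(X)$, while in $Y$ there are, at each scale, radii at which even the sparsest ball has entropy at least $\Ent^\sharp(X)$. I would represent $X$ by a tower, i.e.\ a coherent family of partitions of $X$ into balls of geometrically growing radii, and likewise for $Y$; the branching number of the $X$‑tower at each level is then bounded by $\Ent^\sharp(X)\le\ent^\sharp(Y)$, which is precisely the room guaranteed at the matching level of a $Y$‑tower. Matching the two trees level by level produces a level‑preserving injection of the $X$‑tower into the $Y$‑tower, whose induced map on points is a coarse embedding. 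I expect this to be the main obstacle: one must align the scales of the two towers and verify that the entropy inequality survives the passage to a countable cofinal sequence of radii, so that the required branching is available simultaneously at every level.

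Finally, for item (4) I would first prove the dichotomy that $\Ent^\sharp(X)=2$ for every bounded non‑empty space and $\Ent^\sharp(X)\ge\aleph_0$ for every unbounded one: if $\Ent^\sharp_\e(X)$ were finite for some $\e$, then every ball, hence (enclosing a finite $\e$‑separated subset of $X$ in a single ball) all of $X$, would admit a finite $\e$‑net, forcing $X$ bounded. This already excludes every finite value except $2$. For infinite values I would note that, as $\Ent^\delta_\e(X)$ is non‑decreasing in $\delta$, the supremum defining $\Ent^\sharp_\e(X)$ is attained along the countable cofinal sequence $\delta=n$; a countable supremum of successor cardinals is either eventually constant, giving a successor cardinal, or strictly increasing, giving a limit cardinal of countable cofinality, and the outer $\min$ over $\e$ (attained, as cardinals are well ordered) preserves membership in this class, proving necessity. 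For sufficiency I would exhibit realizing homogeneous ultra‑metric spaces (proper when $\kappa=\aleph_0$): a single point for $\kappa=2$; the anti‑Cantor set $2^{<\w}$ for $\kappa=\aleph_0$; the space $A^{<\w}$ with $|A|=\lambda$ for an infinite successor $\kappa=\lambda^+$, whose balls have $\e$‑entropy $\lambda$, matching $\Ent^\sharp(\IN^{<\w})=\aleph_1$; and, for a limit cardinal $\kappa$ of countable cofinality with $\kappa_n\nearrow\kappa$, the generalised anti‑Cantor space of finitely supported sequences $(a_n)$ with $a_n$ in an alphabet of size $\kappa_n$ and ultrametric $D(a,b)=2^{\max\{n:a_n\ne b_n\}}$, whose balls of radius $2^N$ have $\e$‑entropy $\kappa_N$, so that $\sup_N\kappa_N^+=\kappa$. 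The verification that each value is exactly achieved, and not lowered by the $\min$ over $\e$, is a direct computation in each case.
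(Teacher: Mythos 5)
Your proposal is correct and follows essentially the same route as the paper: items (1)--(2) by transporting nets/separated sets through the bornologous moduli of a coarse equivalence, item (3) by interleaving radius sequences so that the canonical tower of $X$ level-embeds into that of $Y$ (the paper's Proposition~\ref{p5a}), and item (4) by the countable-cofinality-of-a-sup argument together with product-type homogeneous examples ($\lambda^{<\w}$ for successors and a direct-sum construction with alphabet sizes $\kappa_n\nearrow\kappa$ for limits, which is in effect the same tower as the paper's $\IQ(\kappa)$ example). The only cosmetic differences are that you derive (1) from subspace monotonicity plus coarse invariance rather than directly from the asymorphic embedding, and that you work with maximal separated sets where the paper transfers nets through the multi-map.
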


The third item of the preceding proposition generalizes a result of A.Dranishnikov and M.Zarichnyi \cite{DZ} who proved that each ultra-metric space $X$ of bounded geometry is coarsely equivalent to a subspace of the anti-Cantor set $2^{<\w}$.
\smallskip

In fact, the above results apply not only to (homogeneous) ultra-metric spaces but, more generally to asymptotically zero-dimensional (homogeneous) metric spaces because any such a space is bijectively asymorphic to a (homogeneous) ultra-metric  space, see  Proposition~\ref{hom}.

\section{Characterizing the coarse equivalence}\label{sa}

In this section we show that various natural ways of defining morphisms in Asymptology\footnote{The term ``Asymptology'' was introduced by I.Protasov in \cite{PZ} for naming the theory studying large scale properties of metric spaces (or more general objects like {\em balleans} of I.~Protasov \cite{PZ}, \cite{PB} or {\em coarse structures} of J.~Roe \cite{Roe}).} lead to the same notion of coarse equivalence. Besides the original approach of J.~Roe based on the notion of a coarse map, we discuss an alternative approach based on the notion of a multi-map.

By a {\em multi-map} $\Phi:X\Ra Y$ between two sets $X,Y$ we understand any subset $\Phi\subset X\times Y$. For a subset $A\subset X$ by $\Phi(A)=\{y\in Y:\exists a\in A\mbox{ with }(a,y)\in\Phi\}$ we denote the image of $A$ under the multi-map $\Phi$. The inverse $\Phi^{-1}:Y\Ra X$ to the multi-map $\Phi$ is the subset $\Phi^{-1}=\{(y,x)\in Y\times X: (x,y)\in\Phi\}\subset Y\times X$. For two multi-maps $\Phi:X\Ra Y$ and $\Psi:Y\Ra Z$ we define their composition $\Psi\circ\Phi:X\Ra Z$ as usual:
$$\Psi\circ\Phi=\{(x,z)\in X\times Z:\exists y\in Y\mbox{ such that $(x,y)\in \Phi$ and $(y,z)\in\Psi$}\}.$$

 A multi-map $\Phi$ is called {\em surjective} if $\Phi(X)=Y$ and {\em bijective} if $\Phi\subset X\times Y$ coincides with the graph of a bijective (single-valued) function.

A multi-map $\Phi:X\Ra Y$ between metric spaces $X$ and $Y$ is called 
\begin{itemize}
\item {\em bornologous} if for any $\e>0$ there is $\delta>0$ such that for any subset $A\subset X$ with $\diam(A)<\e$ the image $B=\Phi(A)$ has diameter $\diam (B)<\delta$;
\item an {\em asymorphism} if both $\Phi$ and $\Phi^{-1}$ are surjective bornologous multi-maps;
\item an {\em asymorphic embedding} if both $\Phi$ and $\Phi^{-1}$ are bornologous multi-maps and $\Phi^{-1}$ is surjective.
\end{itemize}

It is clear that the composition of two surjective (bornologous) multi-maps is  surjective (and bornologous). Consequently, the composition of asymorphisms is an asymorphism.

\begin{definition}  We shall say that two metric spaces $X,Y$ are  ({\em bijectively}) {\em asymorphic}\footnote{In \cite{PZ} bijective asymorphisms are called asymorphisms while asymorphisms are referred to as quasi-asymorphisms. However we suggest to change the terminology shifting the attention to asymorphisms (in our sense) as a central concept of the  Asymptology.} and will denote this by $X\sim Y$ if there is a (bijective) asymorphism $\Phi:X\Ra Y$. 
\end{definition}

A subset $L$ of a metric space $X$ is called {\em large} if $O_r(L)=X$ for some $r\in\IR$, where $O_r(L)=\{x\in X:\dist(x,L)<\e\}$ stands for the open $r$-neighborhood of the set $L$ in $X$.

The following  characterization is the main (and unique) result of this section. 

\begin{proposition}\label{ascors} For metric spaces $X,Y$ the following assertions are equivalent:
\begin{enumerate}
\item $X$ and $Y$ are asymorphic;
\item $X$ and $Y$ are coarsely equivalent;
\item the spaces $X,Y$ contain bijectively asymorphic large subspaces $X'\subset X$ and $Y'\subset Y$;
\item there are two bornologous maps $f:X\to Y$, $g:Y\to X$ whose inverses $f^{-1}:Y\Ra X$ and $g^{-1}:X\Ra Y$ are bornologous multi-maps and $$\max\{\dist(g\circ f,\id_X),\dist(f\circ g,\id_Y)\}<\infty.$$
\end{enumerate}
\end{proposition}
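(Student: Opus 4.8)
The plan is to prove all four assertions equivalent by running the cycle $(2)\Rightarrow(1)\Rightarrow(4)\Rightarrow(2)$ together with the separate pair $(2)\Rightarrow(3)\Rightarrow(2)$; since $(4)\Rightarrow(2)$ is trivial (the hypotheses of $(4)$ literally contain those of $(2)$), this closes every equivalence. Throughout I would use two elementary observations without further comment: a single-valued bornologous map is the same thing as a bornologous multi-map via its graph, and any sub-multi-map $\Phi'\subset\Phi$ of a bornologous multi-map is again bornologous, because $\Phi'(A)\subset\Phi(A)$ forces $\diam\Phi'(A)\le\diam\Phi(A)$.

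For $(2)\Rightarrow(1)$, given $f,g$ as in $(2)$, I would take the asymorphism to be the union $\Phi=f\cup g^{-1}\subset X\times Y$ of the graph of $f$ and the inverse graph of $g$. Surjectivity of $\Phi$ and of $\Phi^{-1}$ is immediate since $f$ is defined on all of $X$ and $g$ on all of $Y$. To see that $\Phi$ is bornologous I estimate $\diam\Phi(A)$ for $\diam A<\e$: the piece $f(A)$ is small by bornologousness of $f$, while two points $y_1,y_2$ with $g(y_1),g(y_2)\in A$ satisfy $\dist(y_1,y_2)\le 2\dist(f\circ g,\id_Y)+\dist(f(g(y_1)),f(g(y_2)))$, which is bounded because $f$ is bornologous; the cross distances between $f(A)$ and the second piece are controlled the same way. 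Bornologousness of $\Phi^{-1}=f^{-1}\cup g$ is the mirror computation using $g$ and $\dist(g\circ f,\id_X)<\infty$. For $(1)\Rightarrow(4)$, surjectivity of $\Phi^{-1}$ and $\Phi$ makes $\Phi$ both total and co-total, so I may choose selectors $f(x)\in\Phi(x)$ and $g(y)\in\Phi^{-1}(y)$; then $f\subset\Phi$, $g\subset\Phi^{-1}$, $f^{-1}\subset\Phi^{-1}$ and $g^{-1}\subset\Phi$ are all bornologous. The displacement bound follows from the fact that $x$ and $g(f(x))$ both lie in $(\Phi^{-1}\circ\Phi)(\{x\})$, a set of uniformly bounded diameter since $\Phi^{-1}\circ\Phi$ is bornologous and $\{x\}$ has diameter $0$.

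The substantive implication is $(2)\Rightarrow(3)$. Here I would fix a scale $R>2\dist(f\circ g,\id_Y)$ and choose $Y'\subset Y$ to be a maximal $R$-separated subset, which is then automatically large. Setting $X'=g(Y')$, the choice of $R$ forces $g|_{Y'}$ to be injective: if $g(y_1)=g(y_2)$ then $\dist(y_1,y_2)\le 2\dist(f\circ g,\id_Y)<R$, contradicting $R$-separation. I would then check that $X'$ is large using $\dist(g\circ f,\id_X)<\infty$ together with the $R$-density of $Y'$, and that the bijection $h=(g|_{Y'})^{-1}:X'\to Y'$ is a bijective asymorphism: $h^{-1}=g|_{Y'}$ is a restriction of the bornologous $g$, while $h$ is bornologous by pushing distances forward through $f$ and absorbing the displacement constant. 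The converse $(3)\Rightarrow(2)$ is routine: if $h:X'\to Y'$ is a bijective asymorphism between large subspaces, I choose $r$-close projections $p:X\to X'$ and $q:Y\to Y'$ (available because $X',Y'$ are large), which are bornologous, and set $f=h\circ p$ and $g=h^{-1}\circ q$; the displacement estimates follow from bornologousness of $h$ and $h^{-1}$.

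I expect the balancing act in $(2)\Rightarrow(3)$ to be the only real obstacle: one must pick the separation scale $R$ strictly above twice the relevant displacement constant so that the selector $g$ becomes injective on the net $Y'$, and yet still argue that the resulting image $X'=g(Y')$ stays large in $X$. Everything else reduces to systematic triangle-inequality bookkeeping with the bornologousness moduli and the two finite displacement constants.
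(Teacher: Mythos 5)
Your proof is correct, but it closes the equivalences along a different cycle than the paper and replaces the one substantive implication with a different construction. The paper proves the single cycle $(1)\Ra(4)\Ra(2)\Ra(3)\Ra(1)$, so it never needs a direct passage from coarse equivalence to an asymorphism: that is obtained for free by going through the bijectively asymorphic large subspaces of item (3). You instead prove $(2)\Ra(1)$ head-on via the multi-map $\Phi=f\cup g^{-1}\subset X\times Y$, which is a clean explicit witness (your diameter estimates for the two pieces and the cross distances are exactly the right bookkeeping), and then run $(2)\Ra(3)\Ra(2)$ as a separate loop. Your $(1)\Ra(4)$ via selectors and the bound $\{x,g\circ f(x)\}\subset(\Phi^{-1}\circ\Phi)(\{x\})$ coincides with the paper's argument. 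The real divergence is in $(2)\Ra(3)$: the paper takes $Y'=f(X)$, which is large because $\dist(f\circ g,\id_Y)\le R$ forces $O_R(f(X))=Y$, and then picks a transversal $X'\subset X$ of the fibers of $f$ so that $h=f|X'$ is the bijection; you instead take a maximal $R$-separated net $Y'\subset Y$ with $R>2\,\dist(f\circ g,\id_Y)$, observe that $g$ is injective on $Y'$, and set $X'=g(Y')$ with $h=(g|_{Y'})^{-1}$. Both work and both invoke the axiom of choice (yours for the maximal separated set, the paper's for the transversal); the paper's choice of $Y'=f(X)$ is marginally more economical since largeness of $Y'$ is immediate, while your net-based version has the small advantage that $Y'$ is uniformly discrete by construction, which makes the injectivity of the selector transparent. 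The remaining verifications (largeness of $X'$, bornologousness of $h$ and $h^{-1}$ with the displacement constants absorbed) are the same triangle-inequality computations in both treatments.
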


\begin{proof} To prove the equivalence of the items (1)--(4), it suffices to establish the implications $(1)\Ra(4)\Ra(2)\Ra(3)\Ra(1)$.
\smallskip

$(1)\Ra(4)$ Assuming that $X$ and $Y$ are asymorphic, fix a surjective bornologous multi-map $\Phi:X\Ra Y$ with surjective bornologous inverse $\Phi^{-1}:Y\Ra X$. Since the multi-map $\Phi^{-1}$ is surjective, for every $x\in X$ there is a point $f(x)\in Y$ with $x\in\Phi^{-1}(f(x))$, which is equivalent to $f(x)\in\Phi(x)$.
It follows from the bornologity of $\Phi$ that the map $f:X\to Y$ is bornologous. Since $f^{-1}(y)\subset \Phi^{-1}(y)$ for all $y\in Y$, the bornologous property of $\Phi^{-1}$ implies that property for the multi-map $f^{-1}:Y\Ra X$.

By the same reason, the surjectivity of the multi-map $\Phi$ implies the existence of a map $g:Y\to X$ such that $g(y)\in\Phi^{-1}(y)$ for all $y\in Y$. The bornologity of $\Phi$ and $\Phi^{-1}$ implies that $g:Y\to X$ and $g^{-1}:X\Ra Y$ are bornologous. 

Since the composition $\Phi^{-1}\circ\Phi:X\Ra X$ is bornologous, there is a constant $C<\infty$ such that $\diam \Phi^{-1}\circ \Phi(x)\le C$. Observing that $\{x,g\circ f(x)\}\subset\Phi^{-1}\circ \Phi(x)$ we see that $\dist(g\circ f,\id_X)\le C<\infty$. By the same reason, $\dist(f\circ g,\id_Y)<\infty$.
\smallskip

The implication $(4)\Ra(2)$ trivially follows from the definition of the coarse equivalence given in the Introduction.
\smallskip

$(2)\Ra(3)$ Assume that there are two bornologous maps $f:X\to Y$, $g:Y\to X$ with $\dist(g\circ f,\id_X)\le R$ and $\dist(f\circ g,\id_Y)\le R$ for some real number $R$.
It follows that $O_R(f(X))=Y$ and hence the set $Y'=f(X)$ is large in $Y$. Choose any subset $X'\subset X$ making the restriction $h=f|X':X'\to Y'$ bijective. The bornologous property of $f$ implies that the bijective map $h:X'\to Y'$ is bornologous. 

Let us show that the inverse map $h^{-1}:Y'\to X'$ is bornologous.  Given arbitrary $\e<\infty$, use  the bornologity of the map $g:Y\to X$ to find a number $\delta<\infty$ such that $\diam g(C)<\delta$ for every set $C\subset Y$ with $\diam(C)\le\e$. 
Now take any points $y,y'\in Y'$ with $\dist(y,y')\le \e$ and let $x=h^{-1}(y)$ and $x'=h^{-1}(y')$. We claim that $\dist(x,x')\le\delta+2R$. Indeed, the choice of $\delta$ guarantees that $\dist(g(y),g(y'))\le\delta$. Since $\dist(g\circ f,\id_X)\le R$, we conclude that 
$$
\begin{aligned}
\dist(x,x')&\le\dist(x,g\circ f(x))+\dist(g\circ f(x),g\circ f(x'))+\dist(g\circ f(x'),x')\le\\
&\le R+\dist(g(y),g(y'))+R\le\delta+2R.
\end{aligned}
$$

Finally, let us show that the set $X'$ is large in $X$. Given any point $x\in X$, find a point $x'\in X'$ with $f(x)=f(x')$. Then $\dist(x,x')\le \dist(x,g\circ f(x))+\dist(g\circ f(x'),x')\le 2R$ and consequently, $O_{2R}(X')=X$.
\smallskip

$(3)\Ra(1)$ Assume that the spaces $X,Y$ contain bijectively asymorphic large subspaces $X'\subset X$ and $Y'\subset Y$. Let $f:X'\to Y'$ be a bijective asymorphism.
Find $R\in\IR$ such that $O_R(X')=X$ and $O_R(Y')=Y$. Take any maps $\varphi:X\to X'$ and $\psi:Y\to Y'$ with $\dist(\varphi,\id_X)\le R$ and $\dist(\psi,\id_Y)\le R$. It is easy to see that $\varphi$ and $\psi$ are asymorphisms and then the composition $\psi^{-1}\circ f\circ \varphi:X\Ra Y$ is a required asymorphism between $X$ and $Y$.
\end{proof}

\section{Towers}

The results stated in the Introduction are proved by induction on partially ordered sets called towers. Towers are order antipodes of trees but on the other hand, seen as graphs, the towers are trees in the graph-theoretic sense (i.e., are connected graphs without circuits). We recall that a partially ordered set $T$ is a {\em tree} if $T$ has the smallest element and for every point $x\in T$ the lower cone ${\downarrow}x$ is well-ordered. By the {\em lower cone} (resp. {\em upper cone}) of a point $x$ of a partially ordered set $T$ we understand  the set ${\downarrow}x=\{y\in T:y\le x\}$ (resp. ${\uparrow}x=\{y\in T:y\ge x\}$). A subset $A\subset T$ will be called a {\em lower} (resp. {\em upper}) {\em set} if ${\downarrow}a\subset A$ (resp. ${\uparrow}a\subset A$) for all $a\in A$. A partially ordered set $T$ is {\em well-founded} if each subset $A\subset T$ has a minimal element $a\in A$. The minimality of $a$ means that each point $a'\in A$ with $a'\le a$ is equal to $a$.
By $\min T$ we shall denote the set of all minimal elements of $T$.

Now we define the principal technical concept of this paper.

\begin{definition}\label{d1} A partially ordered set $T$ is called a {\em tower} if
\begin{enumerate}
\item $T$ is well-founded;
\item any two elements $x,y\in T$ have the smallest upper bound $\sup(x,y)$ in $T$;
\item for any $x\in T$ the upper cone ${\uparrow}x$ is linearly ordered;
\item for any point $a\in T$ there is a finite number $n=\lev_T(a)$ such that for every minimal element $x\in{\downarrow}a$ of $T$ the order interval $[x,a]={\uparrow}x\cap{\downarrow}a$ has cardinality $\big|[x,y]\big|=n$.
\end{enumerate}
\end{definition}
The function $\lev_T:T\to\IN$, $\lev_T:a\mapsto \lev_T(a)$, from the last item is called the {\em level function}. If the tower $T$ is clear from the context, then we  omit the subscript $T$ and write $\lev(a)$ instead of $\lev_T(a)$. One can observe that $\lev_T=1+\mathrm{rank}_T$ where $\mathrm{rank}_T$ is the usual rank function of the well-founded set $T$, see \cite[Appendix B]{Ke}.

The level function $\lev_T:T\to\IN$ divides $T$ into the levels $L_i=\lev_T^{-1}(i)$, $i\in\IN$. The 1-st level $L_1=\min T$ will be called {\em the base} of $T$ and will be denoted by $[T]$. The number $h(T)=\sup\,\{n\in\IN:L_n\ne\emptyset\}$ is called the {\em height} of the tower $T$. A tower $T$ is {\em unbounded} if it has infinite height.   The following model of the famous Eiffel tower is just an example of a tower of height 7.

\begin{picture}(100,145)(-100,-10)

\put(-50,60){$T$}
\put(-55,-2){$[T]$}
\put(0,120){\circle*{3}}
\put(0,120){\line(0,-1){20}}
\put(0,100){\circle*{3}}
\put(0,100){\line(0,-1){20}}
\put(0,80){\circle*{3}}
\put(0,80){\line(0,-1){20}}
\put(0,60){\circle*{3}}
\put(0,60){\line(0,-1){20}}
\put(0,40){\circle*{3}}
\put(0,40){\line(-3,-4){15}}
\put(0,40){\line(3,-4){15}}
\put(-15,20){\circle*{3}}
\put(-15,20){\line(-1,-1){20}}
\put(-35,0){\circle*{3}}
\put(15,20){\line(1,-1){20}}
\put(15,20){\circle*{3}}
\put(-15,20){\line(1,-4){5}}
\put(15,20){\line(-1,-4){5}}
\put(-10,0){\circle*{3}}
\put(10,0){\circle*{3}}
\put(35,0){\circle*{3}}

\put(50,60){\vector(1,0){60}}
\put(70,66){$\lev_T$}

\put(150,120){\vector(0,1){10}}
\put(150,120){\circle*{3}}
\put(158,118){7}
\put(150,120){\line(0,-1){20}}
\put(150,100){\circle*{3}}
\put(158,98){6}
\put(150,100){\line(0,-1){20}}
\put(150,80){\circle*{3}}
\put(158,78){5}
\put(150,80){\line(0,-1){20}}
\put(150,60){\circle*{3}}
\put(158,58){4}
\put(150,60){\line(0,-1){20}}
\put(150,40){\circle*{3}}
\put(158,38){3}
\put(150,40){\line(0,-1){20}}
\put(150,20){\circle*{3}}
\put(158,18){2}
\put(150,20){\line(0,-1){20}}
\put(150,0){\circle*{3}}
\put(158,-2){1}

\end{picture}

In fact, towers of finite height are not interesting: they are trees in the reverse partial order. Because of that we shall assume that all towers are unbounded.

Each tower carries a canonic {\em path metric} $d_T$ defined by the formula 
$$d_T(x,y)=2\cdot\lev_T\big(\sup(x,y)\big)-\big(\lev_T(x)+\lev_T(y)\big)\mbox{ for $x,y\in T$}.$$
The path metric $d_T$ restricted to the base $[T]$ of $T$ is an ultrametric.
In the sequel talking about metric properties of towers we shall always refer to the path metric. 

A subset $S$ of an tower $T$ is called a {\em subtower} if $S$ is an tower in the induced partial order. For every tower $T$ and an increasing number sequence $\vec k=(k_n)_{n\in\w}$ the subset
$$
T(\vec k)=\{x\in T:\lev(x)\in\{k_n\}_{n\in\w}\}
$$is a subtower of $T$, called the {\em level subtower} of $T$ generated by the sequence $\vec k$, or briefly the {\em level $\vec k$-subtower} of $T$. 

It is easy to see that each unbounded subtower $S$ of a tower $T$ is {\em cofinal}  in $T$ in the sense 
that for every $t\in T$ there is $s\in S$ with $t\le s$. Given a cofinal subset $S\subset T$ consider the map $\next_S:T\to S$ assigning to each $x\in T$ the smallest point $y\in S$ with $y\ge x$ (such a smallest point exists because the upper set ${\uparrow}x$ is well-ordered). It is easy to see that $\next_S([T])\subset[S]$.
The following proposition trivially follows from the definitions. 
 
\begin{proposition}\label{next} Let $T$ be an tower and $S=T(\vec k)$ be a level subtower of $T$. Then the map $\next_S:[T]\to [S]$ is an asymorphism.
\end{proposition}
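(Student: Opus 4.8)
The plan is to verify directly that the single-valued map $\next_S$, viewed as its graph $\Phi=\{(x,\next_S(x)):x\in[T]\}\subset[T]\times[S]$, together with its inverse $\Phi^{-1}$, consists of surjective bornologous multi-maps. First I would pin down $\next_S$ explicitly. Writing $\vec k=(k_n)_{n\in\w}$, the minimal elements of the subtower $S$ are precisely the points of $T$ of level $k_0$, so $[S]=\lev_T^{-1}(k_0)$. Since every upper cone ${\uparrow}x$ is linearly ordered (Definition~\ref{d1}(3)) and, as one checks from Definition~\ref{d1}(4), meets each level $\ge\lev_T(x)$ in exactly one point, the value $\next_S(x)$ for a base point $x\in[T]$ is the unique element of ${\uparrow}x$ of level $k_0$. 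From this description both surjectivity statements are immediate: $\next_S$ is a total function, giving surjectivity of $\Phi^{-1}$, and every $s\in[S]$ lies above some minimal element $x\le s$ of $T$ with $\next_S(x)=s$, giving surjectivity of $\Phi$.

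The heart of the argument is a single computation comparing suprema. Given $x,y\in[T]$, set $s=\next_S(x)$, $t=\next_S(y)$ and $m=\lev_T(\sup(x,y))$. Using linearity of upper cones together with the fact that the level increases by exactly one along each chain, I would show: if $m\le k_0$ then $s=t$, while if $m>k_0$ then $\sup(s,t)=\sup(x,y)$. Indeed, when $m\le k_0$ both $s$ and $t$ lie in the linearly ordered cone ${\uparrow}\sup(x,y)$ at the common level $k_0$, forcing $s=t$; when $m>k_0$ one has $s,t\le\sup(x,y)$, so $\sup(x,y)$ is an upper bound of $\{s,t\}$, and it is the least such because $\sup(s,t)\ge x,y$ already forces $\sup(s,t)\ge\sup(x,y)$.

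This identity makes $\lev_T(\sup(s,t))$, and hence the distance between $s$ and $t$, a monotone function of $\lev_T(\sup(x,y))=d_T(x,y)/2+1$. Consequently bounded sets are carried to bounded sets in both directions. Since $\vec k$ is unbounded, a bound $d_T(x,y)\le\e$ confines $m$, and hence the index $j$ with $k_{j-1}<m\le k_j$ that realizes $\sup(s,t)$, to a finite range, bounding the image diameter and giving bornologity of $\Phi$; conversely a bound on the distance between $s$ and $t$ bounds $j$, hence $m\le k_j$, hence $d_T(x,y)$, giving bornologity of $\Phi^{-1}$. Together with the surjectivity already noted, this shows $\next_S$ is an asymorphism.

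The one point requiring care, and the reason the statement is not quite as instantaneous as the linearly ordered picture suggests, is the bookkeeping between the two level functions $\lev_T$ and $\lev_S$, equivalently between the two path metrics $d_T|_{[S]}$ and $d_S$ on $[S]$. These metrics genuinely differ, but because $\vec k$ is unbounded each is a monotone unbounded reparametrization of the common quantity $\lev_T(\sup(\cdot,\cdot))$, so the identity on $[S]$ is itself an asymorphism between them. Hence it is harmless to run the computation above with whichever of the two metrics is convenient, and the conclusion is insensitive to the choice.
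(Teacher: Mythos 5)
Your argument is correct and is precisely the verification the paper has in mind: the paper offers no proof beyond the remark that the proposition ``trivially follows from the definitions,'' and your computation — $s=t$ when $\lev_T(\sup(x,y))\le k_0$ and $\sup(s,t)=\sup(x,y)$ otherwise, so that $d_S(\next_S x,\next_S y)$ and $d_T(x,y)$ are mutually bounded monotone reparametrizations of $\lev_T(\sup(x,y))$ via the unbounded sequence $\vec k$ — fills in exactly the omitted details, including the genuinely necessary bookkeeping between $\lev_T$ and $\lev_S$. No gaps.
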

 
For every point $x\in T$ of a tower $T$ and a number $i\le \lev(x)$ let $\suc_i(x)=L_i\cap{\downarrow}x$ be the set of predecessors of $x$ in the $i$-th generation and $\deg_i(x)=|\suc_i(x)|$. For $i=\lev(x)-1$ the set $\suc_{i}(x)$ is called the set of parents of $x$ and is denoted by $\suc(x)$. The cardinality $|\suc(x)|$ is called the {\em degree} of $x$ and is denoted by $\deg(x)$. Thus $\deg(x)=\deg_{\lev(x)-1}(x)$. 

For an integer numbers $k\le n$ let  
$$\deg_k^n(T)=\min\{\deg_k(x):x\in L_{n}\}\mbox{ and }\Deg_k^n(T)=\sup\{\deg_k(x):x\in L_n\}.$$
We shall write $\deg_n(T)$ and $\Deg_n(T)$ instead of $\deg^{n+1}_n(T)$ and $\Deg^{n+1}_{n}(T)$, respectively.

The small and large entropies of the boundary $[T]$ of a tower $T$ can be easily calculated via the degrees $\deg_i^j(T)$ and $\Deg_i^j(T)$ of $T$.

\begin{proposition}\label{p7} For any tower $T$ we have
\begin{enumerate}
\item $\ent_{2i}^{2j}([T])=\deg^{j+1}_{i+1}(T)$ and $\Ent_{2i}^{2j}([T])=\Deg_{i+1}^{j+1}(T))^+$ for all $i\le j$;
\item $\ent_\sharp([T])=\min\limits_{i\in\IN}\sup\limits_{j>i}\,(\deg_i^j(T))^+$ and $\Ent^\sharp([T])=\min\limits_{i\in\IN}\sup\limits_{j>i}\,(\Deg_i^j(T))^+.$
\end{enumerate}
\end{proposition}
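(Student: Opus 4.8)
The plan is to reduce both items to a single computation of the entropy $\Ent_{2i}(B_{2j}(x))$ of a ball centered at a point $x$ of the base $[T]$, and then obtain (1) and (2) by taking the appropriate minima and suprema. Throughout I would use that for $x\in[T]$ the upper cone ${\uparrow}x$ is a chain (condition (3) of Definition~\ref{d1}) meeting each level $L_k$ in exactly one point: indeed, applying condition (4) of Definition~\ref{d1} to the interval $[x,y]$ for $y\in{\uparrow}x$ shows that the level function increases by exactly one along each step of this chain, so ${\uparrow}x\cap L_k$ is a singleton; write $a_k(x)$ for its unique element.

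First I would identify the balls. Since $\lev(x)=1$ for $x\in[T]$, the path-metric formula gives $d_T(x,y)=2\lev(\sup(x,y))-2$, so $d_T(x,y)\le 2j$ iff $\lev(\sup(x,y))\le j+1$ iff $y\le a_{j+1}(x)$; hence $B_{2j}(x)=\suc_1(a)$ is the set of base points lying below the level-$(j+1)$ ancestor $a:=a_{j+1}(x)$. Next I would compute a smallest $2i$-net of $B_{2j}(x)$. By the same computation $d_T(y,z)<2i$ iff $\lev(\sup(y,z))\le i$ iff $y$ and $z$ share the same level-$i$ ancestor, so (consistently with $d_T$ being an ultrametric on $[T]$) this is an equivalence relation whose classes are exactly the sets $\suc_1(b)$ for $b\in\suc_i(a)=L_i\cap{\downarrow}a$, of which there are $|\suc_i(a)|=\deg_i(a)$. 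A subset of $B_{2j}(x)$ is a $2i$-net iff it meets every class, so a smallest net picks one point per class and $\Ent_{2i}(B_{2j}(x))=\deg_i(a)=\deg_i(a_{j+1}(x))$; this holds even when $\deg_i(a)$ is infinite, since distinct classes lie at mutual distance $\ge 2i$ and no single point can $2i$-cover two of them.

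Taking the minimum and the supremum over $x\in[T]$ then yields (1). As $x$ runs over $[T]$ the ancestor $a_{j+1}(x)$ runs over all of $L_{j+1}$ (every node has a minimal, hence base, point below it by well-foundedness), so $\ent_{2i}^{2j}([T])=\min_{a\in L_{j+1}}\deg_i(a)=\deg_i^{j+1}(T)$ and $\Ent_{2i}^{2j}([T])=\sup_{a\in L_{j+1}}\deg_i(a)=\Deg_i^{j+1}(T)$. For (2) I would unwind the definition of the sharp entropy. Since $d_T$ takes only even integer values on $[T]$, the map $\delta\mapsto\Ent_\e^\delta([T])$ is constant on each interval $[2j,2j+2)$ and $\e\mapsto\Ent_\e^\delta([T])$ is constant on each $(2i-2,2i]$; thus the suprema over $\delta<\infty$ and the minima over $\e<\infty$ in the definitions of $\Ent_\e^\sharp$ and $\Ent^\sharp$ collapse to a supremum over $j>i$ and a minimum over $i\in\IN$. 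Feeding in (1) gives $\Ent_{2i}^\sharp([T])=\sup_{j>i}(\Deg_i^j(T))^+$ and then $\Ent^\sharp([T])=\min_{i\in\IN}\sup_{j>i}(\Deg_i^j(T))^+$, the successor $(\,\cdot\,)^+$ being inherited verbatim from the definition of $\Ent_\e^\sharp$; the identical computation with $\min$ replacing $\sup$ gives the formula for $\ent^\sharp([T])$.

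The two ball/net identifications are routine. The delicate point, and the step most likely to produce an error, is the bookkeeping matching the discrete radii $2i,2j$ to the ancestor levels $i$ and $j+1$: it is exactly the strict inequality $<$ in the definition of an $\e$-net, against the non-strict $\le$ in the definition of a ball, that pins down these indices, and it is the combination of this with the $(\,\cdot\,)^+$ and the passage to the supremum over $\delta$ that produces the successor cardinal in item (2).
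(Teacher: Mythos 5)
Your argument is correct and is exactly the derivation the paper has in mind: the paper offers no proof of this proposition beyond the remark that it ``can be easily derived from the definitions,'' and your identification of $B_{2j}(x)$ with the set of base points below the level-$(j+1)$ ancestor $a_{j+1}(x)$, together with the partition of that ball into the classes of the relation $d_T(y,z)<2i$ (equivalently $\lev(\sup(y,z))\le i$), indexed by $\suc_i(a_{j+1}(x))$, is the intended computation; your reduction of item (2) to item (1) via the local constancy of $\Ent_\e^\delta$ in $\e$ and $\delta$ is also fine.

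There is, however, one point you must not gloss over: what your computation actually yields in item (1) is $\ent_{2i}^{2j}([T])=\deg_i^{j+1}(T)$ and $\Ent_{2i}^{2j}([T])=\Deg_i^{j+1}(T)$, whereas the printed statement has $\deg_{i+1}^{j+1}(T)$ and $\big(\Deg_{i+1}^{j+1}(T)\big)^+$, yet you write that taking minima and suprema ``yields (1).'' The discrepancy is not in your reasoning but in the statement: with the paper's conventions (closed balls, strict inequality in the definition of an $\e$-net) the net must meet every class of the relation $\lev(\sup(y,z))\le i$, so the lower index is $i$, not $i+1$, and $\Ent_{2i}^{2j}([T])$ is a plain supremum of the cardinals $\deg_i(a)$ over $a\in L_{j+1}$, so no successor can appear in item (1). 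As a sanity check, for the binary tower $T_{\vec 2}$ and $i=j$ the ball $B_{2i}(x)$ contains two points at distance exactly $2i$, so no singleton is a $2i$-net and $\Ent_{2i}(B_{2i}(x))=2=\deg_i^{i+1}(T_{\vec 2})$, while the printed formula would give $\deg_{i+1}^{i+1}(T_{\vec 2})=1$. Your version of (1) is also the one that feeds correctly into the printed item (2), which your derivation reproduces verbatim; so the printed item (1) is evidently an off-by-one misprint with a spurious $(\cdot)^+$, and you should state the corrected formula explicitly rather than claim to have proved the formula as printed.
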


This proposition can be easily derived from the definition of the path metric on the boundary $[T]$ of $T$ and the definition of the small and large sharp entropies of $[T]$. 

In order to prove a tower counterpart of Proposition~\ref{ent}(3) we need a definition.

An injecive (resp. bijective) map $\varphi:T_1\to T_2$ will be called a {\em tower embedding} (resp. {\em a tower isomorphism}) if 
\begin{itemize}
\item $\varphi$ is {\em monotone} in the sense that $x\le y$ in $T_1$ implies $\varphi(x)\le\varphi(y)$ in $T_2$ and
\item {\em level-preserving}, which means that $\lev_{T_2}(\varphi(x))=\lev_{T_1}(x)$ for all $x\in T_1$.
\end{itemize}

This definition combined with the definition of the path metric of a tower implies

\begin{proposition} For each  tower embedding (isomorphism) $\varphi:T_1\to T_2$ the restriction $\varphi|[T_1]:[T_1]\to[T_2]$ is an isometric embedding (bijection).
\end{proposition}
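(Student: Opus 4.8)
The plan is to reduce the whole statement to a single claim, namely that a tower embedding preserves the supremum of any two base points; once this is in hand, the isometry drops out of a direct computation with the path metric, and the isomorphism case is an easy surjectivity check.

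First I would record that $\varphi$ sends $[T_1]$ into $[T_2]$. Since $[T]=L_1=\lev_T^{-1}(1)$ and $\varphi$ is level-preserving, any $x\in[T_1]$ satisfies $\lev_{T_2}(\varphi(x))=\lev_{T_1}(x)=1$, so $\varphi(x)\in[T_2]$. For $x,y\in[T_1]$ the defining formula for $d_T$ collapses to $d_{T_1}(x,y)=2\lev_{T_1}(\sup(x,y))-2$ and likewise $d_{T_2}(\varphi(x),\varphi(y))=2\lev_{T_2}(\sup(\varphi(x),\varphi(y)))-2$. Thus it suffices to prove $\lev_{T_2}(\sup(\varphi(x),\varphi(y)))=\lev_{T_1}(\sup(x,y))$, and I would obtain this from the stronger identity $\varphi(\sup(x,y))=\sup(\varphi(x),\varphi(y))$ combined with level-preservation.

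To establish supremum-preservation, write $s=\sup(x,y)$ and $t=\sup(\varphi(x),\varphi(y))$. Monotonicity makes $\varphi(s)$ an upper bound of $\{\varphi(x),\varphi(y)\}$, so $t\le\varphi(s)$ and hence $t\in[\varphi(x),\varphi(s)]\cap[\varphi(y),\varphi(s)]$. The decisive structural input is Definition~\ref{d1}(4): for the base point $x\le s$ the interval $[x,s]$ is a chain of cardinality $\lev_{T_1}(s)$, while in $T_2$ the interval $[\varphi(x),\varphi(s)]$ is a chain of cardinality $\lev_{T_2}(\varphi(s))=\lev_{T_1}(s)$. Since $\varphi$ is injective and monotone, it carries the finite chain $[x,s]$ into the equinumerous chain $[\varphi(x),\varphi(s)]$, hence bijectively onto it, and the same holds for $[y,s]$. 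Consequently $t$ has an $\varphi$-preimage $z\in[x,s]$ and an $\varphi$-preimage $w\in[y,s]$, and injectivity forces $z=w\in[x,s]\cap[y,s]$.

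The last step pins down this intersection. Because upper cones in a tower are linearly ordered (Definition~\ref{d1}(3)) and $s$ is the least upper bound of $x$ and $y$, one checks ${\uparrow}x\cap{\uparrow}y={\uparrow}s$, whence $[x,s]\cap[y,s]={\uparrow}s\cap{\downarrow}s=\{s\}$. Therefore $z=s$ and $t=\varphi(s)$, which yields the level identity and the isometry. In the isomorphism case, if $\varphi$ is bijective then level-preservation shows that every $y\in[T_2]$ has its unique preimage in $\lev_{T_1}^{-1}(1)=[T_1]$, so $\varphi|[T_1]$ is onto and the isometric embedding is a bijection. I expect the main obstacle to be precisely the supremum-preservation argument, where the interval-cardinality axiom and the linearity of upper cones have to be married to injectivity exactly as above.
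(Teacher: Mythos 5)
Your proof is correct. The paper offers no argument at all (it states the proposition as an immediate consequence of the definitions), and your supremum-preservation claim $\varphi(\sup(x,y))=\sup(\varphi(x),\varphi(y))$ — proved via the interval-cardinality axiom, injectivity, and $[x,s]\cap[y,s]=\{s\}$ — is exactly the detail that makes the "immediate" computation with the path metric go through, including the surjectivity check in the isomorphism case.
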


Now we give conditions of towers $T_1,T_2$ guarantees the existence of a tower embedding (isomorphism) $T_1\to T_2$.

\begin{proposition}\label{p5a} For two towers $T_1,T_2$ there is a tower embedding (isomorphism) $\varphi:T_1\to T_2$  provided $\Deg_k(T_1)\le\deg_k(T_2)$ (and $\Deg_k(T_2)\le\deg_k(T_1)$) for all $k\in\IN$.
\end{proposition}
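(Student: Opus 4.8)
The plan is to build $\varphi$ one sibling subtree at a time, using at each branching point the hypothesis $\Deg_k(T_1)\le\deg_k(T_2)$ to inject the parents of a point of $T_1$ into the parents of its image. First I would record a few structural facts about an unbounded tower $T$. Since $T$ is unbounded and any two points have a supremum, the (linearly ordered, hence well-ordered) upper cone ${\uparrow}x$ of each $x\in L_n$ contains a unique point at every level $m\ge n$: indeed, picking $z$ of high level, $\sup(x,z)$ lies above $x$ at a high level, and level increases strictly along ${\uparrow}x$. Fixing a minimal $b\in[T]$ and letting $c_1\le c_2\le\cdots$ be the points of ${\uparrow}b$ with $\lev(c_m)=m$, the identity $\sup(x,b)=c_{\lev(\sup(x,b))}$ shows that $T=\bigcup_m{\downarrow}c_m$ is an increasing union. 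I would also note that distinct parents $p,p'\in\suc(x)$ have disjoint lower cones (a common predecessor $z$ would put $p,p'$ at the same level in the linear chain ${\uparrow}z$, forcing $p=p'$), so that ${\downarrow}x=\{x\}\sqcup\bigsqcup_{p\in\suc(x)}{\downarrow}p$ for every $x$.

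The heart of the argument is a recursive claim, proved by induction on $n$: for any $x\in L_n(T_1)$ and $y\in L_n(T_2)$ there is a monotone level-preserving injection $\psi\colon{\downarrow}x\to{\downarrow}y$ with $\psi(x)=y$ (and a bijection when the two towers have equal degrees at every level). For $n=1$ both cones are singletons. For the inductive step I set $\psi(x)=y$ and observe that
\[
\deg(x)=\deg_{n-1}(x)\le\Deg_{n-1}(T_1)\le\deg_{n-1}(T_2)\le\deg_{n-1}(y)=\deg(y),
\]
so there is an injection $\iota\colon\suc(x)\to\suc(y)$; applying the induction hypothesis to each pair $(p,\iota(p))$ and amalgamating the resulting maps over the disjoint decomposition of ${\downarrow}x$ yields $\psi$. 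Injectivity and monotonicity of the amalgam follow from the disjointness of the sibling cones together with the injectivity of $\iota$, while level-preservation is automatic.

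To globalise I would fix cofinal chains $(c_m)$ in $T_1$ and $(d_m)$ in $T_2$ as above and define $\varphi$ on ${\downarrow}c_m$ by induction on $m$, always with $\varphi(c_m)=d_m$. Passing from $m-1$ to $m$, only the cones under the new parents $\suc(c_m)\setminus\{c_{m-1}\}$ are added; since $\deg(c_m)\le\deg(d_m)$ these inject into $\suc(d_m)\setminus\{d_{m-1}\}$, and the recursive claim extends $\varphi$ over each added cone while leaving $\varphi|{\downarrow}c_{m-1}$ untouched. The union $\varphi=\bigcup_m\varphi|{\downarrow}c_m$ is then the desired tower embedding. Under the extra hypothesis $\Deg_k(T_2)\le\deg_k(T_1)$ the chain of inequalities forces $\deg_k(T_1)=\Deg_k(T_1)=\deg_k(T_2)=\Deg_k(T_2)$ for each $k$, so every injection above becomes a bijection; each ${\downarrow}c_m\to{\downarrow}d_m$ is then onto, and since $T_2=\bigcup_m{\downarrow}d_m$ the limit map $\varphi$ is a tower isomorphism.

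The routine parts are the degree bookkeeping and the checks of monotonicity and injectivity, both of which reduce to disjointness of sibling cones. The point demanding care — the main obstacle — is organising the gluing so that the finitely many recursive sub-embeddings stay compatible along the spine: one must verify that enlarging ${\downarrow}c_{m-1}$ to ${\downarrow}c_m$ adds \emph{exactly} the sibling cones under the new parents of $c_m$ and disturbs nothing already defined, and, in the isomorphism case, that it is precisely the cofinality of $(d_m)$ in $T_2$ that upgrades the increasing union of bijections to a surjection onto all of $T_2$.
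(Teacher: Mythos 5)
Your proposal is correct and follows essentially the same route as the paper: an induction on level producing embeddings of lower cones ${\downarrow}x\to{\downarrow}y$ via an injection of parent sets (the paper's Lemma on cones, whose ``moreover'' clause is exactly your observation that passing from ${\downarrow}c_{m-1}$ to ${\downarrow}c_m$ only adds the sibling cones of the new parents), followed by globalisation along a cofinal chain ${\uparrow}x_1$. The extra structural checks you record (strict increase of level along upper cones, disjointness of sibling cones, $T=\bigcup_m{\downarrow}c_m$) are the facts the paper leaves implicit, and they are verified correctly.
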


\begin{proof} Assume that $\Deg_k(T_1)\le\deg_k(T_2)$ \ $\big($and $\Deg_k(T_2)\le\deg_k(T_1)$~$\big)$ \ for all $k\in\IN$. We shall need the following

\begin{lemma}\label{l0} For any two points $u\in T_1$ and $v\in T_2$ with $\lev(u)=\lev(v)$ there is a tower embedding (isomorphism) $\varphi:{\downarrow}u\to{\downarrow}v$. Moreover, if for some $u_0\in\suc(u)$ and $v_0\in\suc(v)$ we are given with a tower embedding (isomorphism) $\varphi_0:{\downarrow}u_0\to {\downarrow}v_0$, then the map $\varphi$ can be chosen so that $\varphi|{\downarrow}u_0=\varphi_0$.
\end{lemma}

\begin{proof} The proof is by induction of the level $\lev(u)=\lev(v)$. If this level is 1, then there is nothing to construct: just put $\varphi:\{u\}\to\{v\}$ be the constant map. 

Now assume that the lemma has been proved for all $u,v$ with $\lev(u)=\lev(v)<n$. 

Take any points $u\in T_1$ and $v\in T_2$ with $\lev(u)=\lev(v)=n$. Consider the sets $\suc(u)$ and $\suc(v)$. Since  $\Deg_{n-1}(T_1)\le\deg_{n-1}(T_2)$, we conclude that $|\suc(u)|\le|\suc(v)|$ and thus we can construct an injective map $\xi:\suc(u)\to\suc(v)$. If $\Deg_{n-1}(T_2)\le\deg_{n-1}(T_1)$, then $\suc(u)|=|\suc(v)|$ and we can take $\xi$ to be bijective.

For every $u'\in \suc(u)$ use the inductive assumption to find a tower embedding (isomorphism) $\varphi_{u'}:{\downarrow}u'\to{\downarrow}\xi(u')$. The maps  $\varphi_{u'}$, $u'\in\suc(u)$, can be unified to compose a tower embedding $\varphi:{\downarrow}u\to{\downarrow}v$ such that $\varphi(u)=v$ and $\varphi(x)=\varphi_{u'}(x)$ for each $x\in{\downarrow}u'$ with $u'\in\suc(u)$.

If  for some $u_0\in\suc(u)$ and $v_0\in\suc(v)$ we had a tower embedding (isomorphism) $\varphi_0:{\downarrow}u_0\to {\downarrow}v_0$, then we can choose the injection $\xi$ so that $\xi(u_0)=v_0$ and take $\varphi_{u_0}$ be equal to $\varphi_0$.
\end{proof}

Now the proof of Proposition~\ref{p5} becomes easy. Fix any two points $x_1\in[T_1]$ and $y_1\in[T_2]$ and  consider the upper cones ${\uparrow} x_1=\{x_k:k<h(T_1)+1\}$ and ${\uparrow}y_1=\{y_k:k<h(T_2)+1\}$ where $\lev(x_k)=k=\lev(y_k)$ for all $k$. 

Using Lemma~\ref{l0},  construct a sequence of tower embeddings (isomorphisms) $\varphi_n:{\downarrow}x_n\to {\downarrow}y_n$ such that $\varphi_{n+1}|{\downarrow}x_{n}=\varphi_{n}$ for all $n<h(T_1)+1$. Unifying these  embeddings we obtain a desired tower embedding (isomorphism) $\varphi:T_1\to T_2$ defined by $\varphi(x)=\varphi_n(x)$ for $x\in{\downarrow}x_n$.
\end{proof}


We define a tower $T$ to be {\em homogeneous} if $\deg_n(T)=\Deg_n(T)$ for all $n\in \IN$ (and consequently, $\deg^n_k(T)=\Deg^n_k(T)$ for all $k\le n$).

Applying Proposition~\ref{p5a} to homogeneous towers we get

\begin{corollary}\label{p5} For two homogeneous towers $T_1,T_2$ there is a tower isomorphism $\varphi:T_1\to T_2$ if and only if $\deg_k(T_1)=\deg_k(T_2)$ for all $k\in\IN$.
\end{corollary}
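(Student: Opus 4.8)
The plan is to read this corollary as an easy specialization of Proposition~\ref{p5a} in the \emph{if} direction and as an order-theoretic rigidity statement about tower isomorphisms in the \emph{only if} direction. For the \emph{if} part I would assume $\deg_k(T_1)=\deg_k(T_2)$ for all $k\in\IN$ and simply feed this into Proposition~\ref{p5a}. Since both towers are homogeneous, $\Deg_k(T_i)=\deg_k(T_i)$ for $i\in\{1,2\}$ and every $k$, so the hypothesis gives simultaneously $\Deg_k(T_1)=\deg_k(T_1)=\deg_k(T_2)$ and $\Deg_k(T_2)=\deg_k(T_2)=\deg_k(T_1)$. Thus both inequalities $\Deg_k(T_1)\le\deg_k(T_2)$ and $\Deg_k(T_2)\le\deg_k(T_1)$ demanded by the isomorphism version of Proposition~\ref{p5a} hold, and that proposition delivers a tower isomorphism $\varphi:T_1\to T_2$.

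For the \emph{only if} part I would start from a given tower isomorphism $\varphi:T_1\to T_2$ and show it preserves every degree. The key preliminary step is to verify that $\varphi$ is genuinely an order isomorphism, i.e.\ that $\varphi^{-1}$ is monotone as well (the definition only postulates monotonicity of $\varphi$ itself). Here I would exploit the tower axioms: for each $x$ the upper cone ${\uparrow}x$ is linearly ordered and, being a linearly ordered subset of a well-founded poset, is well-ordered; since covers raise the level by exactly $1$ and the tower is unbounded, ${\uparrow}x$ meets each level $\ge\lev(x)$ in exactly one point. Given $y\le y'$ in $T_2$, set $x=\varphi^{-1}(y)$, $x'=\varphi^{-1}(y')$ and let $\tilde x\in{\uparrow}x$ be the unique element with $\lev(\tilde x)=\lev(x')$. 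Then $\varphi(\tilde x)$ lies in the chain ${\uparrow}y$ at level $\lev(y')$, forcing $\varphi(\tilde x)=y'$; injectivity of $\varphi$ gives $\tilde x=x'$, hence $x\le x'$, so $\varphi^{-1}$ is monotone.

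Once $\varphi$ is known to be a level-preserving order isomorphism, it maps the parent set $\suc(x)$ of each $x\in L_{k+1}(T_1)$ bijectively onto $\suc(\varphi(x))$: monotonicity and level preservation of $\varphi$ give $\varphi(\suc(x))\subseteq\suc(\varphi(x))$, and the same properties of $\varphi^{-1}$ give the reverse inclusion. Therefore $\deg(x)=\deg(\varphi(x))$. Since every level of an unbounded tower is non-empty and both towers are homogeneous, choosing any $x\in L_{k+1}(T_1)$ yields $\deg_k(T_1)=\deg(x)=\deg(\varphi(x))=\deg_k(T_2)$ for all $k$, which completes the argument. The main obstacle I expect is precisely the \emph{only if} direction, namely the verification that a bijective monotone level-preserving map has monotone inverse; this is the only place where the structural axioms of towers (linear orderedness and well-foundedness of upper cones, plus unboundedness forcing every level to be attained) are essential, whereas the \emph{if} direction is a purely formal invocation of Proposition~\ref{p5a}.
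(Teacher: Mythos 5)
Your proof is correct and follows the paper's route: the \emph{if} direction is exactly the paper's one-line application of Proposition~\ref{p5a}, after using homogeneity to convert the degree equalities into the two inequalities that proposition requires. The \emph{only if} direction, which the paper treats as immediate and does not write out, you verify correctly (including the key point that the inverse of a level-preserving monotone bijection between towers is automatically monotone); note that homogeneity is not actually needed there, since a level-preserving order isomorphism preserves the degree of every individual point and induces a level-preserving bijection of the towers.
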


A typical example of a homogeneous tower can be constructed as follows. Let $G$ be a group written as the countable union $G=\bigcup_{n\in\IN}H_n$ of an increasing sequence of subgroups $H_n\subset H_{n+1}$. The set $T=\{gH_n:g\in G,\; n\in\IN\}$ is an tower with respect to the inclusion order ($A\le B$ iff $A\subset B$). Observe that the degree of any element $gH_n$ in $T$ is equal to the index of the subgroup $H_{n-1}$ in the group $H_{n}$ (here we assume that the subgroup $H_0$ is trivial).

In particular, for every sequence $\vec k=(k_n)_{n\in\IN}$ of positive integers we can consider the direct sum $G=\oplus_{n\in\IN}\IZ/k_n\IZ$ of cyclic groups and the subgroups $H_n=\oplus_{i<n}\IZ/k_i\IZ$, $n\in\IN$. The corresponding tower $\{gH_n:g\in G,\; n\in\w\}$ will be denoted by $T_{\vec k}$. For this tower we get $\deg_{n}(T_{\vec k})=k_n$ for all $n\in\w$. 
The tower $T_{\vec 2}$ for the constant sequence $k_n=2$, $n\in\w$, will be called the {\em binary tower}.  It is easy to see that the base $[T_{\vec 2}]$ of the binary tower $T_{\vec 2}$ is bijectively asymorphic to the anti-Cantor set $2^{<\w}$. 
\smallskip

The other natural examples of towers appear as canonical $\vec r$-towers of ultra-metric spaces. For each ultra-metric space $X$ and an unbounded increasing sequence $\vec r=(r_n)_{n\in\IN}$ of real numbers the {\em canonic $\vec r$-tower} $T_X(\vec r)$ of $X$ is defined as follows.

For a point $x\in X$ and a real number $r$ by $B_r(x)$ we denote the (closed-and-open) $r$-ball centered at $x$. 
Consider the family $T_X(\vec r)=\{(B_{r_n}(x),n):x\in X,\; n\in\IN\}$ endowed with the partial order $(B_{r_n}(x),n)\le (B_{r_m}(r),m)$ iff $n\le m$ and $B_{r_n}(x)\subset B_{r_m}(x)$. In the following proposition we shall show that $T_X(\vec r)$ is indeed a tower.

\begin{proposition}\label{hom} If $X$ is a (homogeneous) ultra-metric space, then for any unbounded increasing number sequence $\vec r=(r_n)_{n\in\IN}$ the partially ordered set $$T_X(\vec r)=\{(B_{r_n}(x),n):x\in X,\; n\in\IN\}$$ is a (homogeneous) tower whose  base $[T_X(\vec r)]$ is asymorphic to $X$. Moreover, if $r_1=0$, then $[T_X(\vec r)]$ is bijectively asymorphic to $X$.
\end{proposition}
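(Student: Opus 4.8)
The plan is to verify directly the four conditions of Definition~\ref{d1}, then to read off the level and degree functions, and finally to exhibit an explicit (bijective, when $r_1=0$) asymorphism between $X$ and the base $[T_X(\vec r)]$. Write $T:=T_X(\vec r)$. Everything rests on two elementary features of an ultrametric: if $\dist(x,y)\le r$ then $B_r(x)=B_r(y)$, so every point of a ball is a center of it; and any two balls are either nested or disjoint. Together with the monotonicity $B_s(x)\subseteq B_r(x)$ for $s\le r$, these reduce all the verifications to bookkeeping.

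First I would check that $T$ is a tower. Well-foundedness holds because $(B_{r_n}(x),n)\le(B_{r_m}(y),m)$ forces $n\le m$, so a descending chain strictly decreases the second coordinate, which is bounded below in $\IN$. Given $a=(B_{r_n}(x),n)$ and $b=(B_{r_m}(y),m)$, any upper bound has second coordinate $k\ge\max\{n,m\}$ and its radius-$r_k$ ball must contain both $B_{r_n}(x)$ and $B_{r_m}(y)$; by the center property this ball is $B_{r_k}(x)$ and it contains $B_{r_m}(y)$ exactly when $\dist(x,y)\le r_k$, so $\sup(a,b)=(B_{r_k}(x),k)$ for the least $k\ge\max\{n,m\}$ with $r_k\ge\dist(x,y)$, which exists since $\vec r$ is unbounded; one then checks this is genuinely the least upper bound, not merely minimal. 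The upper cone $\uparrow(B_{r_n}(x),n)=\{(B_{r_m}(x),m):m\ge n\}$ is a chain because the balls $B_{r_m}(x)$ are nested. Finally the minimal elements below $(B_{r_n}(x),n)$ are the $(B_{r_1}(y),1)$ with $y\in B_{r_n}(x)$, and each such interval contains exactly one element $(B_{r_m}(y),m)$ for each $m\in\{1,\dots,n\}$; hence its cardinality is $n$ and $\lev_T(B_{r_n}(x),n)=n$.

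For homogeneity of the tower I would note that $\deg_n(B_{r_{n+1}}(x),n+1)$ equals the number of distinct radius-$r_n$ balls inside $B_{r_{n+1}}(x)$; if $X$ is homogeneous an isometric bijection carrying $x$ to $x'$ maps $B_{r_{n+1}}(x)$ onto $B_{r_{n+1}}(x')$ and permutes $r_n$-balls, so this count is independent of $x$ and $\deg_n(T)=\Deg_n(T)$. For the asymorphism I would use the surjection $q\colon X\to[T]$, $q(x)=(B_{r_1}(x),1)$. From the $\sup$ computation the path metric satisfies $d_T(q(x),q(y))=2(k-1)$ with $k=\min\{j:\dist(x,y)\le r_j\}$. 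Hence a set of $X$-diameter $<\e$ has image of bounded $d_T$-diameter (controlled by the least $j$ with $r_j\ge\e$), so $q$ is bornologous; conversely $d_T(q(x),q(y))<\e$ bounds $k$, hence bounds $\dist(x,y)$ by some $r_N$, and since $q^{-1}(B_{r_1}(x),1)=B_{r_1}(x)$ the ultrametric inequality bounds the $X$-diameter of $q^{-1}(S)$ whenever $S$ has small $d_T$-diameter, so the surjective multi-map $q^{-1}$ is bornologous. Thus $q$ is an asymorphism. When $r_1=0$ each $B_{r_1}(x)=\{x\}$, so $q$ is a bijection with single-valued bornologous inverse, i.e.\ a bijective asymorphism.

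The only slightly delicate step, where I would spend the most care, is condition (2) of Definition~\ref{d1}: one must invoke the unboundedness of $\vec r$ to produce a common upper ball and use the center/nesting properties to confirm that the exhibited pair is the \emph{smallest} upper bound. The bornologity estimates for $q$ and $q^{-1}$ are the other place requiring attention, but both follow mechanically once the identity $d_T(q(x),q(y))=2(k-1)$ is in hand.
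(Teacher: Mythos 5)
Your proof is correct and follows essentially the same route as the paper: both rest on the ultrametric facts that every point of a ball is its center and that two balls are nested or disjoint, verify the tower axioms and homogeneity from these, and treat the $r_1=0$ case via the singleton identification. The only cosmetic difference is that for $r_1>0$ the paper outsources the asymorphism to Proposition~\ref{next} (via the level-subtower map), whereas you verify the explicit map $q(x)=(B_{r_1}(x),1)$ directly through the identity $d_T(q(x),q(y))=2(k-1)$ --- the same map in substance.
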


\begin{proof} The proof follows easily from the fact that for any points $x,y$ of $X$ and numbers $r\le R$ the inclusion $B_r(x)\subset B_R(y)$ is equivalent to $B_r(x)\cap B_R(y)\ne\emptyset$. The latter fact holds because the ultrametric of $X$ satisfies the strong triangle inequality. Consequently, for any $n\in\IN$ and points $x,y\in X$ with $B_{r_1}(x)\subset B_{r_n}(y)$ the order interval $[(B_{r_1}(x),1),(B_{r_n}(y),n)]$ contains exactly $n$ elements of the set $T_X$. This shows that the last condition of Definition~\ref{d1} is satisfied. The other conditions also follow from the same observation: $B_r(x)\cap B_R(y)\ne \emptyset$ implies $B_r(x)\subset B_R(y)$.

If the ultrametric space $X$ is homogeneous, then any isometry of $X$ induces a tower isomorphism of the tower $T(\vec r)$. This fact can be used to prove that the tower $T_X(\vec r)$ is homogeneous if so is the space $X$.

If $r_1=0$, then the base of the tower $T_X(\vec r)$ consists of the singletons $B_0(x)=\{x\}$, so we can consider the identity map $\id:X\to[T_X]$ assigning to each $x\in X$ its singleton $B_0(x)$ and notice that this map is a bijective asymorphism. If $r_1>0$, then the asymorphness of $X$ and $[T_X(\vec r)]$ follows from Proposition~\ref{next}.
\end{proof}

It is known (see Theorems 3.1.1 and 3.1.3 in \cite{PZ}) that a metric space $X$ is bijectively asymorphic to an ultra-metric space if and only if $X$ is asymptotically zero-dimensional. The latter means that for every real number $D>0$ there is a $D$-discrete cover $\U$ of $X$ with 
$$\mesh(\U)=\sup_{U\in\U}\diam U<+\infty.$$ The $D$-discreteness of $\U$ means that $\dist(U,V)>D$ for any distinct sets $U,V\in\U$. The following proposition is a ``homogeneous'' version of the mentioned result.

\begin{proposition}\label{p1} Each (homogeneous) asymptotically zero-dimensional metric space $(X,d)$ admits an ultrametric $\rho$ such that the metric spaces $(X,d)$ and $(X,\rho)$ are bijectively asymorphic (and the ultra-metric space $(X,\rho)$ is homogeneous). 
\end{proposition}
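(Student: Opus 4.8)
The plan is to equip the underlying set $X$ with the ultrametric generated by the \emph{chain-connected components} of $d$ taken at an increasing sequence of scales, and then to observe that this construction is canonical enough to carry over the isometries of $(X,d)$. First I would fix an increasing sequence of positive reals $r_0<r_1<\cdots$ with $r_n\to\infty$, and for each $n$ introduce the equivalence relation $E_n$ on $X$ whose classes are the connected components of the graph with edge set $\{(x,y):\dist(x,y)\le r_n\}$; equivalently $x\,E_n\,y$ iff $x,y$ can be joined by a finite chain $x=z_0,\dots,z_k=y$ with $\dist(z_{i-1},z_i)\le r_n$ for each $i$. The partitions $\U_n=X/E_n$ are then automatically \emph{nested}, $E_0\subset E_1\subset\cdots$, since a chain of step $\le r_n$ is a chain of step $\le r_{n+1}$. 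I would define
$$\rho(x,y)=\begin{cases}0,&x=y,\\ r_{N(x,y)},& x\ne y,\end{cases}\qquad N(x,y)=\min\{n\in\IN:x\,E_n\,y\},$$
noting that $N(x,y)$ is finite because for any $n$ with $r_n\ge\dist(x,y)$ the single edge $(x,y)$ already witnesses $x\,E_n\,y$.

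Next I would check that $\rho$ is a genuine ultrametric and that $\id:X\to X$ is a bijective asymorphism of $(X,d)$ onto $(X,\rho)$. Positivity $\rho(x,y)\ge r_0>0$ for $x\ne y$ is guaranteed by the floor $r_0$, while the strong triangle inequality follows from nestedness and transitivity of the $E_n$: if $m=\max\{N(x,y),N(y,z)\}$ then $x\,E_m\,y\,E_m\,z$, hence $x\,E_m\,z$ and $\rho(x,z)\le r_m=\max\{\rho(x,y),\rho(y,z)\}$. For the asymorphism the forward bornologity is immediate: given $\e$, pick $n$ with $r_n\ge\e$; then $\dist(x,y)\le\e$ forces $x\,E_n\,y$, so $\rho(x,y)\le r_n$. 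The backward bornologity is the one place where asymptotic zero-dimensionality is used: for each $n$ it supplies an $r_n$-discrete (hence pairwise disjoint) cover $\V$ of finite mesh $M_n$, and since any edge of step $\le r_n$ must stay inside a single member of $\V$ (two distinct members being more than $r_n$ apart), every $E_n$-class lies in one member of $\V$ and therefore $\mesh(\U_n)\le M_n<\infty$. Consequently $\rho(x,y)\le r_n$ implies $x\,E_n\,y$ and $\dist(x,y)\le M_n$, so $\id:(X,\rho)\to(X,d)$ is bornologous.

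Finally, the homogeneous refinement should come essentially for free from the canonical nature of the construction. Every isometry $g$ of $(X,d)$ preserves each relation $E_n$, hence satisfies $N(gx,gy)=N(x,y)$ and thus $\rho(gx,gy)=\rho(x,y)$, so every $d$-isometry is simultaneously a $\rho$-isometry. If $(X,d)$ is homogeneous, the transitive action of $\mathrm{Isom}(X,d)$ on $X$ is therefore also transitive by $\rho$-isometries, whence $(X,\rho)$ is homogeneous. I expect the main (and really the only) technical point to be the finite-mesh estimate $\mesh(\U_n)\le M_n$, that is, the verification that the scale-$r_n$ chain components inherit finite diameter from an $r_n$-discrete cover of finite mesh; everything else is formal. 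The decisive advantage of using the intrinsic component partitions $\U_n$, rather than arbitrary discrete covers as in the classical non-homogeneous statement, is that nestedness, finite mesh and—crucially—invariance under $\mathrm{Isom}(X,d)$ hold with no further choices, which is exactly what upgrades the result to its homogeneous form.
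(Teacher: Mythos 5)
Your proposal is correct and follows essentially the same route as the paper: the paper also defines $x$ and $y$ to be $r_n$-equivalent via finite chains of step $\le r_n$, sets $\rho$ to record the first scale at which the two points merge, uses an $r_n$-discrete cover of finite mesh to bound the diameter of each chain-component, and notes that every isometry of $(X,d)$ preserves these canonical partitions and hence is a $\rho$-isometry. The only cosmetic differences are that the paper takes the value of $\rho$ to be the level index $n$ rather than $r_n$ and pre-selects the sequence $(r_n)$ so that $\mesh\U_n<r_{n+1}$, neither of which changes the argument.
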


\begin{proof} Using the definition of the asymptotic zero-dimensionality of $X$, construct an increasing  sequence $(r_n)_{n\in\IN}$ of positive real numbers such that for every $n\in\IN$ the space $X$ has a $r_n$-discrete cover $\U_n$ with $\mesh\,\U_n<r_{n+1}$.

Define two points $x,y\in X$ to be $r_n$-equivalent if there is a chain of points $x=x_0,x_1,\dots,x_k=y$ in $X$ with $\dist(x_{i-1},x_i)\le r_n$ for all $i\le k$.  It is clear that the $r_n$-equivalence is indeed an equivalence relation, which divides the space $X$ into the equivalence classes. Let $C_x$ denote the equivalence class of a point $x\in X$ and let $\mathcal C_n=\{C_x:x\in X\}$. It is clear that the cover $\C_n$ is $r_n$-discrete and $$\mathcal B(r_n)\prec\mathcal C_n\prec\mathcal B(r_{n+1})$$ where $\mathcal B(r)=\{B_r(x):x\in X\}$ is the cover of $X$ by closed $r$-balls, and for two covers $\U,\V$ of $X$ we write $\U\prec\V$ if each set $U\in\U$ lies in some set $V\in\V$.  

Now define the ultra-metric $\rho$ on $X$ letting $$\rho(x,y)=\max\{n\in\w: \{x,y\}\not\prec\C_n\}$$
for different points $x,y\in X$. It is easy to see that the identity map $(X,d)\to (X,\rho)$ is a bijective asymorphism and  each bijective isometry $f:X\to X$ of the metric space $(X,d)$ is an isometry of the metric space $(X,\rho)$. Consequently, the  ultra-metric space $(X,\rho)$ is homogeneous if so is the space $(X,d)$.
\end{proof}

\section{Admissible morphisms of towers}

Let $T_1,T_2$ be two towers.
A map $\varphi:A\to T_2$ defined on a lower subset $A={\downarrow}A$ of $T_1$ is called an {\em admissible morphism} if
\begin{enumerate}
\item $\lev(\varphi(a))=\lev(a)$ for all $a\in A$;
\item $a\le a'$ in $A$ implies $\varphi(a)\le \varphi(a')$;
\item $\varphi(a)=\varphi(a')$ for $a,a'\in A$ implies that $a,a'\in\suc(v)$ for some $v\in T$;
\item $\varphi(A)$ is a lower subset of $T_2$;
\item $|\varphi(\max A)|\le 1$, 
\end{enumerate}where $\max A$ stands for the (possibly empty) set of maximal elements of the domain $A$.

\begin{lemma}\label{l1} Let $\varphi:T_1\to T_2$ be an admissible morphism between towers $T_1,T_2$. Then the restriction $\Phi=\varphi|[T_1]:[T_1]\to[T_2]$ is an asymorphism.
\end{lemma}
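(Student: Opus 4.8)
The plan is to regard $\Phi=\varphi|[T_1]$ as a (total, single-valued) multi-map $[T_1]\Ra[T_2]$ and to check the three requirements of an asymorphism: that $\Phi$ and $\Phi^{-1}$ are bornologous and that both are surjective. First note that $\Phi$ really does land in $[T_2]$: by the level-preservation condition (1), $\varphi$ carries $[T_1]=\lev^{-1}(1)$ into $\lev^{-1}(1)=[T_2]$. I would then prove two things: that $\Phi$ is \emph{almost} an isometry, i.e. $d(\Phi x,\Phi y)\le d(x,y)\le d(\Phi x,\Phi y)+2$ for all $x,y\in[T_1]$, and that $\varphi$ is in fact onto $T_2$. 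Throughout I will use that on the base $d(x,y)=2\lev(\sup(x,y))-2$ and that every upper cone is linearly ordered.

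The estimate $d(\Phi x,\Phi y)\le d(x,y)$ (giving bornologity of $\Phi$) should be immediate from conditions (1) and (2): monotonicity gives $\varphi(\sup(x,y))\ge\sup(\varphi x,\varphi y)$, and then level-preservation yields $\lev(\sup(\varphi x,\varphi y))\le\lev(\sup(x,y))$. For the reverse inequality, which delivers bornologity of $\Phi^{-1}$, I would fix $x\ne y$, put $s=\sup(x,y)$, $k=\lev(s)$, and follow the two chains $x=a_1<\dots<a_k=s$ and $y=b_1<\dots<b_k=s$ in $\downarrow s$, where $a_i\ne b_i$ for $i<k$. Their $\varphi$-images are chains in the linearly ordered cones $\uparrow\varphi x$, $\uparrow\varphi y$ that merge exactly at level $m=\lev(\sup(\varphi x,\varphi y))$, so $\varphi(a_m)=\varphi(b_m)$ while $a_m\ne b_m$ when $m<k$ (the case $m=k$ being trivial). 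This is precisely where condition (3) is used: $\varphi(a_m)=\varphi(b_m)$ forces $a_m,b_m\in\suc(w)$ for some $w$, whence $\lev(w)=m+1$ and $w\ge\sup(a_m,b_m)=s$; thus $k\le m+1$ and $d(x,y)=2k-2\le 2m=d(\Phi x,\Phi y)+2$. The near-isometry then makes $\Phi$ and $\Phi^{-1}$ bornologous with explicit gauges $\delta=\e$ and $\delta=\e+2$.

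The main obstacle will be surjectivity, and the crucial hypothesis is condition (4). I would establish the stronger statement $\varphi(T_1)=T_2$ by contradiction. If $C=T_2\setminus\varphi(T_1)$ is a nonempty upper set, the decisive observation is that $C$ is simultaneously a lower set: if $v\in C$ and some $w\le v$ with $\lev(w)<\lev(v)$ lay in $\varphi(T_1)$, say $w=\varphi(u)$, then the element $u'$ of the cone $\uparrow u$ at level $\lev(v)$ (which exists because an unbounded tower has no maximal elements, so every upper cone meets every higher level) would give $\varphi(u')\ge w$ at level $\lev(v)$; since $\uparrow w$ is linearly ordered, its unique element of that level is $v$, forcing $v=\varphi(u')\in\varphi(T_1)$, a contradiction. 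Hence $\downarrow v\subseteq C$ for every $v\in C$. Fixing $v\in C$ and the chain $v=v_1<v_2<\dots$ of $\uparrow v\subseteq C$, we get $\downarrow v_n\subseteq C$ for all $n$; but any $e\in T_2$ has $\sup(e,v)=v_n$ for some $n$, so $e\in\downarrow v_n\subseteq C$. Thus $C=T_2$ and $\varphi(T_1)=\emptyset$, which is absurd. Therefore $\varphi(T_1)=T_2$, and intersecting with $\lev^{-1}(1)$ gives $\Phi([T_1])=[T_2]$; surjectivity of $\Phi^{-1}$ is automatic since $\Phi$ is total on $[T_1]$. Combining all of this, $\Phi$ is a surjective bornologous multi-map whose inverse is also surjective and bornologous, i.e. an asymorphism.
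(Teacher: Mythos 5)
Your proof is correct, and its core --- the two--sided estimate $d(\Phi x,\Phi y)\le d(x,y)\le d(\Phi x,\Phi y)+2$ --- is the same argument as the paper's, just packaged as a near-isometry: the forward inequality from monotonicity and level-preservation, and the reverse one from condition (3), which forces the two preimage chains to merge at most one level above where their images merge. The genuine difference is that you also prove surjectivity of $\Phi$, which the paper's proof of this lemma silently omits: the authors only verify that $\Phi$ and $\Phi^{-1}$ are bornologous, and they seem to treat surjectivity as an extra hypothesis to be arranged elsewhere (Lemma 6 explicitly produces \emph{surjective} admissible morphisms). Your observation that surjectivity is automatic for an admissible morphism defined on all of $T_1$ --- because condition (4) makes the complement of $\varphi(T_1)$ an upper set, your cone argument makes it a lower set as well, and the existence of $\sup(x,y)$ then forces a nonempty lower-and-upper set to be everything --- is a clean and worthwhile addition that actually closes a small gap in the printed proof. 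The only points that deserve the explicit justification you gave in passing are that an unbounded tower has no maximal elements (so every upper cone meets every level) and that the level function is injective on each linearly ordered upper cone; both are immediate from conditions (2) and (4) of the definition of a tower.
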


\begin{proof} Given any $n\in\w$ and any subset $A\subset[T_1]$ with $\diam A\le 2n$ we conclude that $A\subset{\downarrow}v$ for some $v\in L_{n+1}$. The monotonicity of $\varphi$ implies that $\varphi(A)\subset\varphi({\downarrow} v)={\downarrow}\varphi(v)$ and thus
$$\diam(\varphi(A))\le\diam({\downarrow}\varphi(v))\le 2n$$ because $\lev(\varphi(v))=\lev(v)=n+1$.

Now assume conversely that $B\subset [T_2]$ is a subset with $\diam(B)\le 2n$. We claim that $\diam(\varphi^{-1}(B))\le 2n+2$. Take any two points $x,y\in\varphi^{-1}(B)$. The inequality $\diam(B)\le 2n$ implies that $B\subset{\downarrow}b$ for some $b\in T_2$ with $\lev(b)=n+1$. 
Let $x',y'\in L_n$ be two points with $x\le x'$ and $y\le y'$. It follows that $\lev(\varphi(x'))=\lev(x')=n+1=\lev(y')=\lev(\varphi(y'))$. 

We claim that $\varphi(x')=b$. For the smallest lower bound $v=\sup(b,\varphi(x'))$,  consider the lower cone ${\downarrow}v$ that contains the point $\varphi(x)$ as a minimal element. Since the order interval $[\varphi(x),v]$ is well-ordered and contains two elements $b$ and $\varphi(x')$ at the same level, we conclude that $\varphi(x')=b$. 
By the same reason $\varphi(y')=b$. Since $\varphi$ is an admissible morphism, the equality $\varphi(x')=\varphi(y')$ implies that $x',y'\in\suc(w)$ for some point $w\in T$. It follows that $\lev(w)=\lev(x')+1=\lev(b)+1=n+2$ and hence $$\dist(x,y)=2\,\lev (\sup(x,y))-2\,\le 2\lev(\sup(x',y'))-2\le 2\,\lev(w)-2=2n+2.$$
\end{proof}

For a real number $r$ denote by 
$$\lfloor r\rfloor=\min\{n\in\IZ:r\le n\}\mbox{ and }\lceil r\rceil=\max\{n\in\IZ:r\ge n\}$$
two nearest integer numbers to $r$.

The following lemma is a crucial step in the proof of Theorem~\ref{ast}.

\begin{lemma}\label{l2} For two towers $T_1,T_2$  there is a surjective admissible morphism $\varphi:T_1\to T_2$ provided there are two sequences $(a_i)_{i\in\IN}$ and $(b_i)_{i\in\IN}$ of reals such that $1\le a_i\le a_i+2\le b_i$, $\lceil a_i\rceil\le \deg_i(T_1)$, and 
$$b_i+a_i\cdot\frac{\Deg_i(T_2)}{a_{i+1}}\le \deg_i(T_1)\le\Deg_i(T_1)\le a_i+b_i\cdot\Big(\frac{\deg_i(T_2)}{b_{i+1}}-2\Big)$$for all $i\in\IN$. 
\end{lemma}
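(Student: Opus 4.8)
The plan is to construct $\varphi$ by a top-down recursion along the levels, controlling at every vertex $v\in T_2$ its \emph{multiplicity} $m(v)=|\varphi^{-1}(v)|$. Concretely, I would fix cofinal chains $x_1<x_2<\cdots$ in $T_1$ and $y_1<y_2<\cdots$ in $T_2$ with $\lev(x_n)=n=\lev(y_n)$, set $\varphi(x_n)=y_n$, and obtain $\varphi$ as the union of admissible morphisms defined on the lower cones ${\downarrow}x_n\to{\downarrow}y_n$. Here condition (5) is exactly what is used at each finite stage (the apex $x_n$ maps to the single point $y_n$), while $\max T_1=\emptyset$ renders it vacuous in the limit. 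Throughout the recursion I would maintain the two-sided invariant $a_{\lev v}\le m(v)\le b_{\lev v}$ for every $v$, so that $m(v)$ is a positive integer squeezed between the prescribed reals; this is where $a_i\ge 1$ and $\lceil a_i\rceil\le\deg_i(T_1)$ enter, ensuring that the integer window demanded at each vertex is nonempty and attainable.

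The heart of the argument is the single local extension step. Suppose $\varphi(w)=v^{+}$ is already defined for some $w\in T_1$ at level $i+2$, with $\varphi^{-1}(v^{+})=\{w_1,\dots,w_M\}$ and $M=m(v^{+})\in[a_{i+1},b_{i+1}]$. I must extend $\varphi$ over $\bigcup_t\suc(w_t)$ onto $\suc(v^{+})$. Condition (3) is the decisive constraint: since $\varphi^{-1}(v')$ must consist of siblings for each child $v'\in\suc(v^{+})$, the children of $v^{+}$ are \emph{forced} to split into disjoint blocks $\suc(v^{+})=\bigsqcup_t C_{w_t}$, where $C_{w_t}$ is covered exactly by the parents $\suc(w_t)$ of the single preimage $w_t$. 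Writing $s_t=|\suc(w_t)|\in[\deg_i(T_1),\Deg_i(T_1)]$ and $D=|\suc(v^{+})|\in[\deg_i(T_2),\Deg_i(T_2)]$, the task becomes: choose block sizes $k_t=|C_{w_t}|$ with $\sum_t k_t=D$, and inside block $t$ distribute the $s_t$ parents among the $k_t$ children so that each child $v'$ receives $m(v')\in[a_i,b_i]$ preimages; the latter is possible precisely when $k_t\,a_i\le s_t\le k_t\,b_i$.

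This reduces feasibility to two independent checks, and the two displayed inequalities are exactly tailored to them. The \emph{aggregate} check $D\,a_i\le\sum_t s_t\le D\,b_i$ follows from the outer inequalities: using $\sum_t s_t\ge M\deg_i(T_1)\ge a_{i+1}\deg_i(T_1)$ with the left inequality gives $\sum_t s_t\ge a_i\Deg_i(T_2)\ge D\,a_i$, while using $\sum_t s_t\le M\Deg_i(T_1)\le b_{i+1}\Deg_i(T_1)$ with the right inequality gives $\sum_t s_t\le b_i\deg_i(T_2)\le D\,b_i$. The additive terms $b_i$ and $-2b_i$ are the slack absorbing both the passage from the real bounds $a_i,b_i$ to integer multiplicities and the extra cost of cutting the global distribution into the per-block pieces that condition (3) demands.

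The step I expect to be the main obstacle is the \emph{per-block integrality}: for the actual degree $s_t$ one must find an integer $k_t$ with $k_t a_i\le s_t\le k_t b_i$, and these $k_t$ must moreover sum to $D$. Since $b_i\ge a_i+2$, the admissible windows $[k a_i,k b_i]$ for consecutive $k$ eventually overlap, so their union is a final segment of the integers; combined with the lower bound that the left inequality forces on $\deg_i(T_1)$, every $s_t$ in $[\deg_i(T_1),\Deg_i(T_1)]$ then lands in some window, and the freedom in the $k_t$ (a discrete intermediate-value argument) lets one hit the prescribed sum $D$. Checking that the \emph{same} sequences $(a_i),(b_i)$ make the aggregate and per-block conditions hold simultaneously at every level, and arranging the seeding and limit so that the two-sided bound $a_{\lev v}\le m(v)\le b_{\lev v}$ holds at \emph{every} vertex of the apex-free infinite tower (the lower bound being reached only in the limit, by adjoining new sibling-branches at each stage), is the delicate bookkeeping that the carefully chosen constants in the hypothesis are designed to support. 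Once $\varphi$ is assembled it is, by construction, a surjective admissible morphism.
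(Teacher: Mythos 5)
Your plan is essentially the paper's own proof: $\varphi$ is assembled as an increasing union of morphisms ${\downarrow}A_n\to{\downarrow}y_n$ along cofinal chains (so that condition (5) holds at each finite stage and is vacuous in the limit), the invariant $a_{\lev(v)}\le|\varphi^{-1}(v)|\le b_{\lev(v)}$ is exactly the paper's notion of an \emph{admissible set}, and your local extension step --- splitting $\suc(v^{+})$ into sibling-blocks, one per preimage point, forced by condition (3) --- is precisely the inductive step of the paper's Claim~1. The only divergence is how the per-block feasibility is closed: the paper fixes the balanced quotas $d_x\in\{\lfloor \deg(w)/|A|\rfloor,\lceil \deg(w)/|A|\rceil\}$ summing to $\deg(w)$ and reads the needed bounds $b_i+a_i(d_x-1)\le\deg(x)\le a_i+b_i(d_x-1)$ directly off the displayed hypotheses, which is tighter than your ``overlapping windows plus discrete intermediate value'' argument --- in particular your claim that every $s_t$ lands in some window $[k a_i,k b_i]$ does not follow from the left inequality alone (the union of the windows becomes an interval only from about $a_i^2/(b_i-a_i)$ onward), but needs both displayed inequalities combined, exactly as in the paper's chain.
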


\begin{proof} We define a subset $A\subset T_1$ to be {\em admissible} if $A \subset\suc (v)$ for some $v\in L_k$, $k\in\w$, and $a_k\le|A|\le b_k$. In this case we write $v=\sup(A)$.

Our lemma will be derived from the following
\smallskip

\noindent{\bf Claim 1.} {\em For any admissible subset $A\subset T_1$ and any $w\in T_2$ with $\lev(A)=\lev(w)$ there is an admissible morphism $\varphi:{\downarrow}A\to {\downarrow}w\subset T_2$.
Moreover, if we had an admissible morphism $\varphi_0:{\downarrow}A_0\to {\downarrow}w$ defined on the lower set of an admissible subset $A_0\subset {\downarrow}A$ with $\sup A_0\in A$, then the admissible morphism $\varphi$ can be chosen so that $\varphi|{\downarrow}A_0=\varphi_0$.}
\smallskip

This claim will be proven by induction on the level $\lev(w)$ of the point $w\in T_2$.
If $\lev(w)=1$, then there is noting to construct: just take $\varphi:{\downarrow}A\to\{w\}$ be the constant map. Assume that the claim is proved for all points $w\in T_2$ with $\lev(w)\le n$. 

Take any point $w\in T_2$ with $\lev(w)=n+1$ and let $A\subset T_1$ be an admissible subset with $\lev(A)=\lev(w)=n+1$. For every point $x\in A$ choose a number $d_x\in\{\lfloor \deg(w)/|A|\rfloor,\lceil \deg(w)/|A|\rceil\}$ so that $\sum_{x\in A}d_x=\deg(w)$. 

For every $x\in A$ write the set $\suc(x)$ as a disjoint union $\suc(x)=\cup \A_x$ of a family of admissible sets with cardinality $|\A_x|=d_x$. This is possible because $$
\begin{aligned}
&b_{n}+a_{n}(d_x-1)\le b_{n}+a_{n}\frac{\deg(w)}{|A|}\le b_{n}+a_{n}\frac{\Deg_n(T_2)}{a_{n+1}}\le\\
&\le  \deg_n(T_1)\le\deg(x)\le\Deg_n(T_1)\le a_{n}+b_{n}\cdot\Big(\frac{\deg_n(T_2)}{b_{n+1}}-2\Big)\le\\
&\le a_{n}+b_{n}\cdot\big(\frac{\deg(w)}{|A|}-2)\le a_{n}+b_{n}\cdot(d_x-1).
\end{aligned}$$ 
Moreover, those inequalities guarantee that we can choose the family $\A_x$ to contain an admissible set of any cardinality between $a_{n}$ and $b_{n}$.

Then the family $\A=\bigcup_{x\in A}\A_x$ has cardinality $|\A|=\deg(w)$ and hence we can find a bijective map $f:\A\to\suc(w)$. By the inductive assumption, for each set $A'\in\A$ we can find an admissible surjective homomorphism $\varphi_{A'}:{\downarrow} A'\to{\downarrow} f(A')$. Now define an admissible homomorphism $\varphi:{\downarrow}A\to{\downarrow}w$ letting
$$\varphi(x)=
\begin{cases}
\varphi_{A'}(x)&\mbox{ if $x\in{\downarrow}A'$ for some $A'\in\A$};\\
b&\mbox{ if $x\in A$}.
\end{cases}
$$  

If for some admissible subset $A_0\subset {\downarrow}A$ with $\sup A_0\in A$ we are given with an admissible morphism $\varphi_0:{\downarrow}A_0\to {\downarrow}w$, then we can include the admissible set $A_0$ into the family $\A$ and choose the admissible morphism $\varphi_{A_0}$ equal to $\varphi_0$. This completes the proof of  Claim 1.
\smallskip

To prove the lemma, take increasing sequences $\{x_n:n\in\IN\}\subset T_1$ and $\{y_n:n\in\IN\}\subset T_2$  with $\lev(x_n)=n=\lev(y_n)$ for all $n\in\IN$. For every $n\in\IN$ by induction choose an admissible subset $A_n\subset T_1$ such that $x_n\in A_n\subset\suc(x_{n+1})$. Such a choice is possible because $\lceil a_n\rceil\le \deg_n(T_1)\le \deg(x_{n+1})$. Then ${\downarrow}A_n\subset {\downarrow}A_{n+1}$. Using Claim 1, we can construct a sequence $\varphi_n:{\downarrow} A_n\to {\downarrow}y_n$, $n\in\IN$, of surjective admissible morphisms such that $\varphi_{n+1}|{\downarrow}A_n=\varphi_n$. The union $\varphi=\bigcup_{n\in\IN}\varphi_n:T_1\to T_2$ is a well-defined admissible morphism.
\end{proof}

\section{Asymptotically homogeneous towers}\label{sas}

In this section we shall apply Lemma~\ref{l2} in order to prove that the base $[T]$ of each asymptotically homogeneous tower $T$ is asymorphic to the anti-Cantor set. Let us observe that a tower $T$ is proper (as a metric space) if $[T]$ is unbounded in the path metric of $T$ and the lower set ${\downarrow}x$ of each point $x\in T$ is finite. 

\begin{definition} A tower $T$ is called {\em asymptotically homogeneous} if $T$ is proper and there is a real constant $C$ such that
$$\prod_{k=n}^m\frac{\Deg_k(T)}{\deg_k(T)}\le C$$
 for every $k\le n$. This is equivalent to saying that the infinite product
$$\prod_{k=1}^\infty\frac{\Deg_k(T)}{\deg_k(T)}$$is convergent. 
\end{definition}

The following lemma is a crucial step in the proof of Theorem~\ref{asym} below.

\begin{lemma}\label{l3x} For any asymptotically homogeneous tower $T$ there are real sequences $(a_n)$, $(b_n)$, and increasing number sequences $(n_i)$, and $(m_i)$ such that 
\begin{equation}\label{eq0b}
1\le a_{i}\le a_{i}+2\le b_{i},\quad a_{i}+1\le \deg_{n_{i}}^{n_{i+1}}(T)
\end{equation} and 
\begin{equation}\label{eq1a}
b_{i}+a_{i}\frac{2^{m_{i+1}-m_{i}}}{a_{i+1}}\le \deg^{n_{i+1}}_{n_{i}}(T)\le\Deg^{n_{i+1}}_{n_{i}}(T)\le a_{i}+b_{i}\Big(\frac{2^{m_{i+1}-m_{i}}}{b_{i+1}}-2\Big)
\end{equation}
for all $i\in\IN$.
\end{lemma}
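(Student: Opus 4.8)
The plan is to construct the four sequences by a single recursion on $i$, feeding off two consequences of asymptotic homogeneity. Write $D_i=\deg_{n_i}^{n_{i+1}}(T)$, $\overline D_i=\Deg_{n_i}^{n_{i+1}}(T)$, $\delta_i=\overline D_i/D_i-1\ge 0$ and $t_i=2^{m_{i+1}-m_i}$, and recall the submultiplicative bounds $\prod_{k=n}^{m-1}\deg_k(T)\le\deg_n^m(T)$ and $\Deg_n^m(T)\le\prod_{k=n}^{m-1}\Deg_k(T)$. First, since $T$ is proper its base $[T]$ is unbounded; as each ball $B_{2n}(x)\subset[T]$ equals the set of base-predecessors of the level-$(n+1)$ ancestor of $x$ and these balls exhaust $[T]$, boundedness of $\Deg_1^{n+1}(T)$ would make $[T]$ finite, so $\Deg_1^{n+1}(T)\to\infty$. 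Since convergence of $\prod_k\Deg_k(T)/\deg_k(T)$ gives a finite $C=\prod_k\Deg_k(T)/\deg_k(T)$ with $\Deg_1^{n+1}(T)\le\prod_{k=1}^n\Deg_k(T)\le C\prod_{k=1}^n\deg_k(T)$, we get $\prod_{k=1}^n\deg_k(T)\ge\Deg_1^{n+1}(T)/C\to\infty$, so for fixed $n_i$ the block $D_i\ge\prod_{k=n_i}^{n_{i+1}-1}\deg_k(T)$ can be made arbitrarily large by enlarging $n_{i+1}$. Second, the tails $R_N=\prod_{k\ge N}\Deg_k(T)/\deg_k(T)$ tend to $1$, and $\overline D_i/D_i\le\prod_{k=n_i}^{n_{i+1}-1}\Deg_k(T)/\deg_k(T)\le R_{n_i}$, so $\delta_i$ can be made as small as we like by taking $n_i$ large.

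Next I would isolate the algebraic content of (\ref{eq1a}). Rearranging its two outer inequalities for the next terms gives $a_{i+1}\ge a_it_i/(D_i-b_i)$ and $b_{i+1}\le b_it_i/(\overline D_i-a_i+2b_i)$, and the cleanest way to meet them is to take equalities, defining
\[ a_{i+1}=\frac{a_it_i}{D_i-b_i},\qquad b_{i+1}=\frac{b_it_i}{\overline D_i-a_i+2b_i}. \]
Substituting back shows the outer inequalities of (\ref{eq1a}) at index $i$ become equalities while the middle one $D_i\le\overline D_i$ is automatic; thus (\ref{eq1a}) is satisfied for free and everything reduces to checking (\ref{eq0b}).

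The recursion is then run starting from $a_1=1$, $b_1=10$. At step $i$ I first choose $n_{i+1}$ so large that $D_i\ge\max\{a_i+1,\,4b_i\}$, that $b_i<D_i$, and that $\delta_i$ and $3b_i/D_i$ are small (all legitimate by the two consequences above, and permissible because the a priori bound $b_i\le 10\cdot 4^{i-1}$ below is known before $n_{i+1}$ is chosen); this also makes both denominators positive. Then I take $t_i$ to be the least power of $2$ with $t_i\ge 2D_i$, which fixes $m_{i+1}-m_i\ge 1$ and keeps $(m_i)$ strictly increasing, and I define $a_{i+1},b_{i+1}$ by the formulas above. Since $t_i\ge 2D_i$ and $D_i-b_i\le D_i$ we get $a_{i+1}\ge 2a_i$, so $a_i\ge 2^{i-1}\ge 1$; since $t_i<4D_i$ while both denominators exceed $D_i$, we get $b_{i+1}<4b_i$, so $b_i\le 10\cdot 4^{i-1}$. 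These give at once the part $1\le a_i$ and $a_i+1\le D_i=\deg_{n_i}^{n_{i+1}}(T)$ of (\ref{eq0b}).

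The one substantial obstacle is the remaining inequality $a_i+2\le b_i$. Dividing the two defining formulas gives
\[ \frac{b_{i+1}/a_{i+1}}{b_i/a_i}=\frac{D_i-b_i}{\overline D_i-a_i+2b_i}<1, \]
so the recursion forces $b_i/a_i$ to decrease at every step, and I must keep it bounded away from $1$. I would estimate each factor from below by $1-\delta_i-3b_i/D_i$ and note $\sum_i(\delta_i+3b_i/D_i)<\infty$ by the bounds $\delta_i\to 0$ and $b_i/D_i\le 10\cdot 4^{-i-1}$; by arranging the blocks so that this sum is at most $\frac12$, the telescoping product stays $\ge\frac12$, whence $b_i/a_i\ge(b_1/a_1)\cdot\frac12=5$ for all $i$. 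Combined with $a_i\ge 1$ this yields $b_i-a_i=a_i(b_i/a_i-1)\ge 4\ge 2$, i.e. $a_i+2\le b_i$, finishing (\ref{eq0b}) and hence the lemma. This quantitative interplay — that $b/a$ must drift downward yet be kept above $1$, which is precisely where both the summability coming from asymptotic homogeneity ($\delta_i\to 0$) and the fast growth of the blocks $D_i$ are used — is the heart of the argument.
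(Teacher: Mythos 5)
Your proposal is correct and follows essentially the same route as the paper's proof: an inductive construction in which $a_{i+1}$ and $b_{i+1}$ are defined by saturating the two outer inequalities of (\ref{eq1a}), the blocks $\deg_{n_i}^{n_{i+1}}(T)$ are made large, and the monotonically decreasing ratio $b_i/a_i$ is kept bounded away from $1$ using the convergence of $\prod_k\Deg_k(T)/\deg_k(T)$. The only (cosmetic) differences are that you use $\Deg_{n_i}^{n_{i+1}}(T)$ directly where the paper uses the upper bound $C_{n_i}^{n_{i+1}}\deg_{n_i}^{n_{i+1}}(T)$, and you secure the gap $b_i-a_i\ge 2$ via the summable error $\sum_i(\delta_i+3b_i/D_i)\le\tfrac12$ rather than via the paper's invariant $b_i/a_i\ge C_{n_i}^\infty\delta_{n_i}^\infty$ together with a large choice of $m_{i+1}$.
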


\begin{proof} Those sequences will be constructed by induction. However we should first make some preparatory work. The asymptotic homogeneity of $T$ allows us to find a sequence of real numbers $c_i>1$, $i\in\IN$, such that 
$$\Deg_i(T)\le c_i\cdot \deg_i(T),\;\;i\in\IN,$$and
the infinite product $\prod_{i=1}^\infty c_i$ converges to some real number $C_1^\infty$. Also fix any sequence of real numbers $\delta_i>1$ with convergent infinite product $\prod_{i=1}^\infty\delta_i$. 

For numbers $1\le i\le j\le\infty$ let 
$$C_i^j=\prod_{k=i}^{j-1}c_k\mbox{ and }\delta_i^j=\prod_{k=i}^{j-1}\delta_k.$$To simplify the notation, for $i\le j$ we put $d_i^j=\deg_i^j(T)$ and $D_i^j=\Deg_i^j(T)$. 
It follows from the choice of the numbers $c_i$ that 
\begin{equation}\label{eq0a}
D_i^j\le C_i^j\cdot d_i^j.
\end{equation}

By induction, for every $i\in\IN$ we shall construct real numbers $a_i,b_i$ and positive integers $n_i,m_i$ that satisfy the conditions (\ref{eq1a}) and 
\begin{equation}\label{eq2a}
\frac{b_i}{a_i}\ge C_i^\infty\cdot\delta_i^\infty
\end{equation}

To start the induction, let $n_1=1$, $m_1=0$, and choose any real numbers $a_1,b_1$ satisfying the inequalities $$1\le a_1<a_1+2\le b_1\mbox{ and }b_1\ge a_1\cdot C_1^\infty\cdot\delta_1^\infty.$$

Assume that the numbers $a_{i}$, $b_{i}$, $n_{i}$, $m_{i}$ satisfying (\ref{eq1a}) and (\ref{eq2a}) have been constructed. 

Since the base $[T]$ of $T$ is unbounded, the sequence $(D_{n_i}^n)_{n\ge n_i}$ is unbounded. This fact combined with the almost homogeneity of $T$ implies that the  sequence $(d_{n_{i}}^n)_{n\ge n_{i}}$ is unbounded too. Consequently, there is a number $n_{i+1}>n_{i}$ such that $d_{n_{i}}^{n_{i+1}}>\lceil a_{i}\rceil$ and
\begin{equation}\label{eq3a}
\frac{d_{n_{i}}^{n_{i+1}}-b_{i}}{d_{n_{i}}^{n_{i+1}}+2b_{i}}\ge \frac1{\delta_{n_{i}}}.
\end{equation}
Next, find a number $m_{i+1}>m_{i}$ such that 
\begin{equation}\label{eq4a}
2^{m_{i+1}-m_{i}}(C_{n_{i+1}}^\infty\cdot\delta_{n_{i+1}}^\infty-1)\frac{a_{i}}
{d_{n_{i}}^{n_{i+1}}-b_{i}}>2
\end{equation}and the numbers $a_{i+1}$ and $b_{i+1}$ defined by 
\begin{equation}\label{eq5a}a_{i+1}=\frac{2^{m_{i+1}-m_{i}}a_{i}}{d_{n_{i}}^{n_{i+1}}-b_{i}}\mbox{ and } b_{i+1}=\frac{2^{m_{i+1}-m_{i}}b_{i}}{C_{n_{i}}^{n_{i+1}}
d_{n_{i}}^{n_{i+1}}+2b_{i}-a_{i}}
\end{equation}are greater than 1. 
We claim that the so defined numbers $n_{i+1}$, $m_{i+1}$, $a_{i+1}$, $b_{i+1}$, satisfy the inductive assumptions. In fact, the condition (\ref{eq1a}) follows directly from the definitions of the numbers $a_{i+1}$ and $b_{i+1}$ and the inequality (\ref{eq0a}). To see that (\ref{eq2a}) also holds, observe that
$$\frac{b_{i+1}}{a_{i+1}}=\frac{b_{i}}{a_{i}}\cdot\frac{d_{n_{i}}^{n_{i+1}}-b_{i}}
{C_{n_{i}}^{n_{i+1}}d_{n_{i}}^{n_{i+1}}+2b_{i}-a_{i}}\ge 
\frac{b_{i}}{a_{i}}\cdot \frac1{C_{n_{i}}^{n_{i+1}}}\cdot\frac{d_{n_{i}}^{n_{i+1}}-b_{i}}{d_{n_{i}}^{n_{i+1}}+
2b_{i}}\ge$$and using (\ref{eq3a}), the trivial inequality $\delta_{n_{i}}^{n_{i+1}}\ge \delta_{n_{i}}$, and the inductive assumption $\frac{b_{i}}{a_{i}}\ge C_{n_{i}}^\infty\cdot\delta_{n_{i}}^\infty$ we can continue as
$$\ge \frac{b_{i}}{a_{i}}\cdot\frac1{C_{n_{i}}^{n_{i+1}}}\cdot\frac1{\delta_{n_{i}}}\ge C_{n_{i}}^\infty\cdot\delta_{n_{i}}^\infty\cdot\frac1{C_{n_{i}}^{n_{i+1}}}\cdot\frac1{
\delta_{n_{i}}^{n_{i+1}}}=C_{n_{i+1}}^\infty\cdot \delta_{n_{i+1}}^\infty.$$
The lower bound $\frac{b_{i+1}}{a_{i+1}}\ge C_{n_{i+1}}^\infty\cdot\delta_{n_{i+1}}^\infty>1$ combined with the choice of $m_{i+1}$ in (\ref{eq4a}) yields the condition (\ref{eq0b}):
$$b_{i+1}-a_{i+1}=\Big(\frac{b_{i+1}}{a_{i+1}}-1\Big)\,a_{i+1}\ge (C_{n_{i+1}}^\infty\cdot\delta_{n_{i+1}}^\infty-1)\frac{2^{m_{i+1}-m_{i}}\,a_{i}}
{d_{n_{i}}^{n_{i+1}}-b_{i}}>2.$$
This finishes the inductive step, and also the proof of the lemma.
\end{proof}

We apply Lemmas~\ref{l2} and \ref{l3x} to prove the main result of this section. 

\begin{theorem}\label{asym} The base $[T]$ of each asymptotically homogeneous tower $T$ is asymorphic the anti-Cantor set $2^{<\w}$.
\end{theorem}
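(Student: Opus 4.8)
The plan is to derive the theorem by assembling Lemmas~\ref{l3x}, \ref{l2}, and \ref{l1} with Proposition~\ref{next}, so that the whole argument reduces to a single application of Lemma~\ref{l2} between suitable level subtowers of $T$ and of the binary tower $T_{\vec 2}$. First I would invoke Lemma~\ref{l3x} to fix reals $(a_i)$, $(b_i)$ and increasing integer sequences $(n_i)$, $(m_i)$ (with $n_1=1$, $m_1=0$) satisfying the inequalities (\ref{eq0b}) and (\ref{eq1a}). I then pass to two level subtowers: $T_1=T(\vec n)$, the level subtower of $T$ generated by $\vec n=(n_i)$, and $B=T_{\vec 2}(\vec k)$, the level subtower of the binary tower generated by $\vec k=(m_i+1)$ (the index shift is forced by $m_1=0$, and keeps $\vec k$ a genuine increasing sequence of levels).

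The key computation is that these subtowers have exactly the degrees appearing in Lemma~\ref{l3x}. Directly from the definition of the degree of a level subtower one gets $\deg_i(T(\vec n))=\deg^{n_{i+1}}_{n_i}(T)$ and $\Deg_i(T(\vec n))=\Deg^{n_{i+1}}_{n_i}(T)$, while $B$ is homogeneous with $\deg_i(B)=\Deg_i(B)=2^{m_{i+1}-m_i}$, since each successive level of $T_{\vec 2}$ doubles the degree. Substituting $\Deg_i(B)=\deg_i(B)=2^{m_{i+1}-m_i}$ turns (\ref{eq1a}) verbatim into the central chain of inequalities required by Lemma~\ref{l2} for the pair $(T_1,B)$; the remaining hypotheses $1\le a_i\le a_i+2\le b_i$ and $\lceil a_i\rceil\le\deg_i(T_1)$ follow from (\ref{eq0b}), using $\lceil a_i\rceil\le a_i\le a_i+1\le\deg^{n_{i+1}}_{n_i}(T)$. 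Lemma~\ref{l2} then yields a surjective admissible morphism $\varphi\colon T_1\to B$, and Lemma~\ref{l1} upgrades its boundary restriction to an asymorphism $[T(\vec n)]\to[B]$.

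It then remains only to chain asymorphisms, all of which are already available: Proposition~\ref{next} gives $[T]\sim[T(\vec n)]$ and $[B]=[T_{\vec 2}(\vec k)]\sim[T_{\vec 2}]$, and $[T_{\vec 2}]$ is bijectively asymorphic to $2^{<\w}$ as noted after the definition of the binary tower. Because the composition of asymorphisms is again an asymorphism and asymorphism is symmetric, concatenating these links produces an asymorphism $[T]\Ra 2^{<\w}$, as desired. The genuinely hard analytic content — choosing the sequences so that the ratios $b_i/a_i$ stay large enough to absorb the degree fluctuations of $T$ while the gaps $b_i-a_i$ remain bounded below, exploiting the convergence of $\prod_k \Deg_k(T)/\deg_k(T)$ — is already packaged in Lemma~\ref{l3x}, and the combinatorial splitting realizing the morphism is packaged in Lemma~\ref{l2}. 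Hence the only real obstacle at this stage is bookkeeping: reconciling the two indexing conventions for the level subtowers and confirming that after the substitution $\Deg_i(B)=\deg_i(B)=2^{m_{i+1}-m_i}$ the inequalities (\ref{eq0b})--(\ref{eq1a}) are \emph{literally} the hypotheses of Lemma~\ref{l2}. I would double-check the shift $m_1=0\mapsto k_i=m_i+1$ so that $B$ is a bona fide level subtower to which Proposition~\ref{next} applies.
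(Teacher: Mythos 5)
Your argument is correct and essentially identical to the paper's own proof: both pass to the level subtower $T(\vec n)$ of $T$ and a level subtower of the binary tower, verify the degree identities $\deg_i(T(\vec n))=\deg^{n_{i+1}}_{n_i}(T)$, $\Deg_i(T(\vec n))=\Deg^{n_{i+1}}_{n_i}(T)$, $\deg_i(T_{\vec 2}(\vec m))=2^{m_{i+1}-m_i}$, and then apply Lemmas~\ref{l3x}, \ref{l2} and \ref{l1} together with Proposition~\ref{next} to chain the asymorphisms $[T]\sim[T(\vec n)]\sim[T_{\vec 2}(\vec m)]\sim[T_{\vec 2}]\sim 2^{<\w}$. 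Your attention to the index shift $k_i=m_i+1$ (forced by $m_1=0$) is a minor bookkeeping point that the paper's own proof glosses over.
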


\begin{proof} Let $(a_i)_{i=1}^\infty$, $(b_i)_{i=1}^\infty$, $\vec n=(n_i)_{i=1}^\infty$ and $\vec m=(m_i)_{i=1}^\infty$ be the sequences from Lemma~\ref{l3x}. Let $T(\vec n)$ be the level $\vec n$-subtower of $T$. By Proposition~\ref{next}, the map $\next_1:[T]\to[T(\vec n)]$ is an asymorphism. By the same reason, the map $\next_2:[T_2]\to [T_2(\vec m)]$ from the base of the binary tower $T_2$ to the base of its level $\vec m$-subtower $T_2(\vec m)$ is an asymorphism.

 Observe that 
$$
\begin{aligned}
&\deg_i(T_{\vec 2}(\vec m))=2^{m_{i+1}-m_{i}},\\
&\deg_i(T(\vec n))=\deg^{n_{i+1}}_{n_{i}}(T),\\
&\Deg_i(T(\vec n))=\Deg^{n_{i+1}}_{n_{i}}(T)
\end{aligned}
$$ 
which allows us to apply Lemma~\ref{l2} to find an admissible morphism $\varphi:T(\vec n)\to T_{2}(\vec m)$. By Lemma~\ref{l1}, $\varphi$ induces an asymorphism between the bases $[T(\vec n)]$ and $[T_2(\vec m)]$. Finally we obtain an asymorphism  between $[T]$ and the anti-Cantor set $2^{<\w}$ as the composition  of the asymorphisms
$$[T]\sim [T(\vec k)]\sim [T_2(\vec m)]\sim [T_2]\sim 2^{<\w}.$$
\end{proof}

\section{Proof of Theorem~\ref{ast}.} 

We should prove that an unbounded ultra-metric space $X$ of bounded geometry is asymorphic to the anti-Canor set provided there is an increasing unbounded sequence $\vec r=(r_n)_{n\in\IN}$ such that
\begin{equation}\label{eq:ast}\prod_{n\in\IN}\frac{\Ent_{r_n}^{r_{n+1}}(X)}{\ent_{r_n}^{r_{n+1}}(X)}<\infty.
\end{equation}
By Proposition~\ref{hom}, $X$ is asymorphic to the base $[T_X(\vec r)]$ of 
the canonic $\vec r$-tower $T_X(\vec r)=\{(B_{r_n}(x),n):x\in X,\; n\in\IN\}$. The entropy condition (\ref{eq:ast})  is equivalent to the asymptotic homogeneity of the tower $T_X(\vec r)$. Applying Theorem~\ref{asym}, we conclude that the anti-Cantor set $2^{<\w}$ is asymorphic to the base $[T_X(\vec r)]$ of $T_X(\vec r)$ and hence is also asymorphic to $X$.

\section{Proof of Proposition~\ref{ent}.}

1. Assume that a metric space $X$ is coarsely equivalent to a subspace $Z$ of a metric space $Y$. By Proposition~\ref{ascors}, there is an asymorphism $\Phi:X\Ra Z\subset Y$, which is an asymorphic embedding of $X$ into $Y$.

Find $\e>0$ such that $\Ent_\e^\sharp(Y)=\Ent^\sharp(Y)$. Since $\Phi^{-1}:Y\Ra X$ is bornologous, there is $\e'>0$ such that $\diam(\Phi^{-1}(B))\le\e'$ for every bounded subset $B\subset Y$ with $\diam(B)\le 2\e$. 

We claim that $\Ent^\sharp_{\e'}(X)\le\Ent_\e^\sharp(Y)$. This inequality will follow as soon as we check that $\Ent_{\e'}^\delta(X)<\Ent_\e^\sharp(Y)$ for every $\delta<\infty$. Since the multi-map $\Phi$ is bornologous, there is a real number $\delta'$ such that $\diam(\Phi(A))\le\delta'$ for any bounded subset $A\subset X$ with $\diam(A)\le 2\delta$.  We claim that
\begin{equation}\label{eqp1}
\Ent_{\e'}^\delta(X)\le \Ent_{\e}^{\delta'}(Y)<\Ent_{\e}^\sharp(Y)=\Ent^\sharp(Y).
\end{equation} The strict inequality follows from the definition of $\Ent_\e^\sharp(Y)$. To prove the other inequality, take any $x_0\in X$ and observe that $\diam\big(\Phi(B_\delta(x_0))\big)\le\delta'$ and thus $\Phi(B_\delta(x_0))\subset B_{\delta'}(y_0)$ for some $y_0\in Y$.

It follows that the ball $B_{\delta'}(y_0)$ has an $\e$-net $N\subset  B_{\delta'}(y_0)$ of size $|N|\le\Ent_\e^{\delta'}(Y)$. Consider the subset $N_1=\{y\in N:\dist\big(y,\Phi(B_{\delta}(x_0))\big)<\e\}$ and for every $y\in N_1$ choose a point $y'\in \Phi(B_{\delta}(x_0))$ with $\dist(y',y)<\e$. Then the set $N_2=\{y':y\in N_1\}$ is a $2\e$-net for $\Phi(B_\delta(x_0))$ of size $|N_2|\le|N_1|\le|N|$.

For every $y\in N_2$ pick a point $\xi(y)\in\Phi^{-1}(y)\cap B_\delta(x_0)$. We claim that the set $M=\{\xi(y):y\in N_2\}$ is an ${\e'}$-net for $B_\delta(x_0)$. Indeed, for every $a\in  B_\delta(x_0)$ and every $b\in\Phi(a)$ we can find a point $y\in N_2$ with $\dist(b,y)<2\e$. Observe that $\{a,\xi(y)\}\subset\Phi^{-1}(\{b,y\})$.
Since $\diam(\{y,b\})<2\e$, the choice of ${\e'}$ guarantees that $$\diam\,\{a,\xi(y)\}\le\diam\,\Phi^{-1}(\{b,y\})\le {\e'}$$witnessing that $M$ is an ${\e'}$-net for $B_\delta(x_0)$. Therefore, $\Ent_{\e'}(B_\delta(x_0))\le|M|\le|N_2|\le|N|\le\Ent_\e^{\delta'}(Y)$ and (\ref{eqp1}) holds.
Now we see that
$$\Ent^\sharp(X)\le\Ent^\sharp_{\e'}(X)\le\Ent^\sharp_\e(Y)=\Ent^\sharp(Y).$$
\smallskip

2. Assume that two metric spaces $X,Y$ are asymorphic and let $\Phi:X\Ra Y$ be an asymorphism. It follows from the preceding case that $\Ent^\sharp(X)=\Ent^\sharp(Y)$. 

Now we shall prove that $\ent^\sharp(X)\le\ent^\sharp(Y)$.
Find $\e>0$ with $\ent_\e^\sharp(Y)=\ent^\sharp(Y)$. The bornologity of $\Phi^{-1}$ yields us a real number $\e'$ such that $\diam(\Phi^{-1}(B))<\e'$ for any subset $B\subset Y$ of diameter $\diam(B)\le 2\e$. We claim that $$\ent^\sharp(X)\le\ent^\sharp_{\e'}(X)\le\ent^\sharp_\e(Y)=\ent^\sharp(Y).$$
Assuming conversely that 
$\ent_{\e'}^\sharp(X)>\ent_\e^\sharp(Y)$, we could find $\delta<\infty$ such that $(\ent_{\e'}^\delta(X))^+>\ent_{\e}^\sharp(Y)$, which is equivalent to $\ent_{\e'}^\delta(X)\ge\ent^\sharp(Y)$. 
The bornologity of $\Phi$ yields a real number $\delta'$ such that $\diam \Phi(A)\le\delta'$ for any subset $A\subset X$ with $\diam(A)\le2\delta$. The definition of $\ent_\e^\sharp(Y)$ implies that $\min_{y\in Y}\Ent_\e(B_{\delta'}(y))=\ent_\e^{\delta'}(Y)<\ent_\e^\sharp(Y)$ and thus there is a point $y_0\in Y$ with $\Ent_\e(B_{\delta'}(y_0))<\ent_\e^\sharp(Y)$. This means that the ball $B_{\delta'}(y_0)$ contains an $\e$-net $N$ of size $|N|<\ent_\e^\sharp(Y)$.

Now take any point $x_0\in\Phi^{-1}(y_0)$ and consider the closed $\delta$-ball $B_\delta(x_0)\subset X$. It follows from the choice of $\delta'$ that $\diam \Phi(B_\delta(x_0))\le\delta'$. Since $y_0\in\Phi(x_0)\subset\Phi(B_\delta(x_0))$, we conclude that $\Phi(B_\delta(x_0))\subset B_{\delta'}(y_0)$. Repeating the argument from the preceding item, we can transform the $\e$-net $N$ into an $\e'$-net $M\subset B_\delta(x_0)$ of cardinality $|M|\le|N|$. Then 
$$\ent_{\e'}^\delta(X)\le\Ent_{\e'}(B_\delta(x_0))\le|M|\le|N|<\ent_\e^\sharp(Y)$$ which is a desired contradiction that proves the inequality $\ent^\sharp(X)\le\ent^\sharp(Y)$.

The reverse inequality $\ent^\sharp(X)\ge\ent^\sharp(Y)$ can be proved by analogy.
\smallskip

3. Assume that $X,Y$ are two ultra-metric spaces with $\Ent^\sharp(X)\le\ent^\sharp(Y)$. Find a real number $R$  such that $\Ent_{r}^\sharp(X)=\Ent^\sharp(X)$ and $\ent_{r}^\sharp(Y)=\ent^\sharp(Y)$ for all $r\ge R$. Using the definition of $\Ent^\sharp_{r}(X)$ we can find two unbounded increasing sequences of real numbers $\vec r=(r_n)_{n\in\IN}$ and $\vec \rho=(\rho_n)_{n\in\IN}$ such that $r_1=R=\rho_1$ and $\Ent_{r_{n+1}}^{r_n}(X)\le\ent_{\rho_{n+1}}^{\rho_n}(Y)$ for all $n\in\IN$.

It follows from Proposition~\ref{hom} that $X$ is asymorphic to the base $[T_X(\vec r)]$ of the canonic level $\vec r$-subtower $T_X(\vec r)$ while $Y$ is asymorphic to the base $[T_Y(\vec \rho)]$ of the level $\vec \rho$-subtower $T_Y(\vec\rho)$ of $Y$. 
Let $\Phi_X:X\to [T_X(\vec r)]$ and $\Phi_Y:Y\to [T_Y(\vec \rho)]$ be the corresponding asymorphisms.

Observe that $\Deg_i^j(T_X(\vec r))=\Ent_{r_i}^{r_j}(X)$ and $\deg_i^j(T_X(\vec r))=\ent_{r_i}^{r_j}(X)$ for all $i<j$. This implies that $\Deg_n(T_X(\vec r))\le \deg_n(T_Y(\vec\rho))$ for all $n\in\IN$. Applying Proposition~\ref{p5a} we can find a tower embedding $\varphi:T_X(\vec r)\to T_Y(\vec\rho)$ which induces an isometric embedding $\psi=\varphi|[T_X(\vec r)]:[T_X(\vec r)]\to [T_Y(\vec \rho)]$. Now we see that the multi-map $\Psi=\Phi_Y^{-1}\circ\psi\circ \Phi_X:X\Ra Y$ is an asymorphic embedding. Considered as a multi-map into $\Psi(X)\subset Y$, $\Psi:X\to\Psi(X)$ is an asymorphism of $X$ onto the subspace $\Psi(X)$ of $Y$. By Proposition~\ref{ascors}, $X$ is coarsely equivalent to $\Psi(X)$. 
\smallskip

3. Let $X$ be a metric space. We need to check that if $\Ent(X)$ is a limit cardinal, then it has countable cofinality. Find a real number $\e>0$ with $\Ent(X)=\Ent_\e(X)$ and notice that $\Ent_\e(X)=\sup_{n\in\IN}(\Ent_\e^n(X))^+$.

Now assume that $\kappa$ is a cardinal $\kappa$ such that either $\kappa=2$ or $\kappa$ is an infinite successor cardinal or else $\kappa$ is a limit cardinal of countable cofinality. We need to find a homogeneous ultra-metric space $X$ with $\Ent^\sharp(X)=\kappa$. For this we consider 3 cases.
\smallskip

(a) If $\kappa\le\aleph_0$, then we have the necessary examples because  $\Ent^\sharp(\{0\})=2$, and $\Ent^\sharp(2^{<\w})=\aleph_0$.
\smallskip

(b) If $\kappa=\lambda^+$ is an infinite successor cardinal, then we can consider the ultra-metric space $\lambda^{<\w}$ and observe $\Ent^\sharp(\lambda^{<\w})=\lambda^+=\kappa$.
\smallskip

(c) Finally assume that $\kappa$ is an uncountable limit cardinal of countable cofinality and choose an increasing sequence of infinite cardinals $(\kappa)_{n\in\IN}$ with $\sup_{n\in\IN}\kappa_n=\kappa$. Let $X=\IQ(\kappa)$ be a linear space over the field $\IQ$ having the set of ordinals  $\kappa=\{\alpha:\alpha<\kappa\}$ for a Hamel basis. For every $n\in\IN$ let $L_n=\IQ(\kappa_n)$ be the linear subspace algebraically generated by the subset $\kappa_n\subset \kappa$. On the space $\IQ(\kappa)$ consider the ultra-metric $$d(x,y)=2\cdot\max\{n\in\IN:x-y\notin L_n\}$$ where $x,y\in X$ are two distinct points of $X$. 

Observe that for every $n<m$ we get $\Ent_{n}(L_m)=|L_m/L_n|=\kappa_m$ and hence $\Ent^\sharp_n(X)=\sup_{m\in\IN}\kappa_m^+=\kappa$ and $\Ent^\sharp(X)=\min_{n\in\IN}\Ent^\sharp_n(X)=\kappa$.

\section{Proof of Theorem~\ref{class}.}\label{s5}

 We need to prove that two homogeneous ultra-metric spaces $X$ and $Y$ are asymorphic if and only  if $\Ent^\sharp(X)=\Ent^\sharp(Y)$. The ``only if'' part follows from 
Proposition~\ref{ent}(2).
\smallskip

To prove the ``if'' part, assume that  $\Ent^\sharp(X)=\Ent^\sharp(Y)=\kappa$.
\smallskip

1. If $\kappa\le 1$, then the metric spaces $X,Y$ are bounded and hence asymorphic.
\smallskip

2. If $\kappa=\aleph_0$, then the spaces $X,Y$, being  homogeneous, are asymorphic to the anti-Cantor set $2^{<\w}$ according to Theorem~\ref{ast}.
\smallskip

3. Assume that $\kappa=\mu^+$ is an infinite successor cardinal. Then we can choose an unbounded increasing sequence $\vec r=(r_n)_{n\in\IN}$ of real numbers such that 
$$\Ent_{r_n}^{r_{n+1}}(X)=\mu=\Ent_{r_n}^{r_{n+1}}(Y)$$ for all $n\in\IN$.
By  Proposition~\ref{hom}, $X$ is asymorphic to the base $[T_X(\vec r)]$ of the (homogeneous) canonic $\vec r$-tower $T_X(\vec r)$ of $X$. 

The same is true for the space $Y$: it is asymorphic to the base $[T_Y(\vec r)]$ of its canonic $\vec r$-tower $T_Y(\vec r)$. By Corollary~\ref{p5}, the homogeneous towers $T_X(\vec r)$ and $T_Y(\vec r)$ are isomorphic, which implies that their bases $[T_X(\vec r)]$ and $[T_Y(\vec r)]$ are isometric. Combining the asymorphisms $$X\sim [T_X(\vec r)]\sim[T_Y(\vec r)]\sim Y$$ we conclude that the spaces $X,Y$ are asymorphic.
\smallskip

4. Finally assume that $\kappa=\Ent^\sharp(X)=\Ent^\sharp(Y)$ is an uncountable limit cardinal. We can choose an unbounded increasing sequence $\vec r=(r_n)_{n\in\IN}$ of real numbers such that the sequences $\kappa_n=\deg^n(T_X(\vec r))$ and $\mu_n=\deg^n(T_Y(\vec r))$, $n\in\IN$, consists of infinite cardinals, are increasing and have $\sup_{n\in\IN}\kappa_n=\kappa=\sup_{n\in\IN}\mu_n$. 

In the item 3(c) of the proof of Theorem~\ref{class} we defined the space $\IQ(\kappa)$ endowed with the ultrametric
$$d_1(x,y)=2\cdot\max\{n\in\IN:x-y\notin\IQ(\kappa_n)\}$$where $x,y\in\IQ(\kappa)$ are distinct points of $\IQ(\kappa)$. This space is isometric to the base of the homogeneous tower $T_1=\{x+\IQ(\kappa_n):x\in\IQ(\kappa_n),\;n\in\IN\}$ with $\deg^n(T_1)=|\IQ(\kappa_{n+1})/\IQ(\kappa_{n})|=\kappa_n $ for all $k\in\IN$ (here we assume that $\kappa_{0}=0$). By Corollary~\ref{p5}, the homogeneous towers $T_X(\vec r)$ and $T_1$ are isomorphic and consequently, their bases $[T_X(\vec r)]$ and $\IQ(\kappa)=[T_1]$ are isometric. Taking into account that $X$ is asymorphic to $[T_X(\vec r)]$, we see that the spaces $X$ and $(\IQ(\kappa),d_1)$ are asymorphic.

By the same reason, $Y$ is asymorphic to  the space $\IQ(\kappa)$ endowed with the ultra-metric $$d_2(x,y)=2\cdot\max\{n\in\IN:x-y\notin\IQ(\mu_n)\}$$where $x,y\in\IQ(\kappa)$ are distinct points of $\IQ(\kappa)$. 

Since the sequences $(\kappa_n)$ and $(\mu_n)$ are strictly increasing and have the same supremum, the identity map $(\IQ(\kappa),d_1)\to (\IQ(\kappa),d_2)$ is a bijective asymorphism. Combining the (bijective) asymorphisms
$$X\sim [T_X(\vec r)]\sim (\IQ(\kappa),d_1)\sim(\IQ(\kappa),d_2)\sim[T_Y(\vec r)]\sim Y$$we conclude that $X$ and $Y$ are asymorphic.

\section{Some Open Problems}

In this paper we characterized homogeneous ultra-metric spaces asymorphic to the anti-Cantor set: those are exactly homogenous ultra-metric spaces with $\Ent^\sharp(X)=\aleph_0$. However for arbitrary (not necessarily homogeneous) metric spaces a similar characterization problem seems to be much more difficult.

\begin{problem} Find necessary and sufficient conditions on an ultra-metric space $X$ guaranteeing that $X$ is asymorphic to the anti-Cantor set $2^{<\w}$. In particular, is $X$ asymorphic to $2^{<\w}$ if $\ent^\sharp(X)=\Ent^\sharp(X)=\aleph_0$?
\end{problem}

We can pose a simpler question asking if the condition in Theorem~\ref{ast} involving infinite products can be replaced by a weaker condition.

\begin{problem} Is a proper ultra-metric space $X$ asymorphic to the anti-Cantor set   if there is a real constant $C$ and an increasing number sequence $(r_i)$ such that
$$\Ent_{r_i}^{r_j}(X)\le C\cdot\ent_{r_i}^{r_j}(x)$$ for all $i<j$?
\end{problem}

This problem is equivalent to the following one.

\begin{problem} Is the base $[T]$ of a proper tower $T$ asymorphic to the anti-Cantor set if 
$$\sup_{i<j}\frac{\Deg_i^j(T)}{\deg_i^j(T)}<\infty?$$
\end{problem}

If the two latter problems have affirmative answers then the following problem concerning the hyperspace $\exp_{\le n}(2^\w)$ of the anti-Cantor set also has an affirmative answer. For a metric space $X$ by $\exp_{\le n}(X)$ we denote the space of non-empty at most $n$-element subsets of $X$ endowed with the Hausdorff metric
$$\mathrm{dist}_H(A,B)=\max\{\max_{a\in A}\dist(a,B),\max_{b\in B}\dist(b,A)\}\mbox{ for $A,B\in\exp_{\le n}(X)$}.$$ The asymptotic properties of the hyperspaces $\exp_{\le n}(X)$ have been studied by O.Shukel in \cite{S}.

\begin{problem} Is the hyperspace $\exp_{\le n}(2^{<\w})$ asymorphic to $2^{<\w}$ for every $n\in\IN$?
\end{problem}

Proposition~\ref{ent}(2) guarantees that each metric space $X$, asymorphic to the anti-Cantor set $2^{<\w}$, has small sharp entropy $\ent^\sharp(X)=\aleph_0$. The simplest unbounded metric space $X$ with $\ent^\sharp(X)=2$ is the quickly increasing number sequence $S=\{n^2:n\in\IN\}$. It is easy to check that $2=\ent^\sharp(S)<\Ent^\sharp(X)=\aleph_0$.

\begin{problem} Characterize ultra-metric spaces $X$ whose product $X\times 2^{<\w}$ with the anti-Cantor set $2^{<w}$ is asymorphic to $2^{<\w}$. In particular, is $S\times 2^{<\w}$ asymorphic to $2^{<\w}$.
\end{problem} 

Here we endow the product  $X\times Y$ of two (ultra-)metric spaces $X,Y$ with the (ultra-)metric
$$\dist\big((x,y),(x',y')\big)=\max\{\dist(x,x'),\dist(y,y')\big).$$

Proposition~\ref{ent}(3) guarantees that an ultra-metric space $X$ contains a coarse copy of the anti-Cantor set $2^{<\w}$ provided $\ent^\sharp(X)=\aleph_0$. However there are ultra-metric spaces $X$ with $\ent^\sharp(X)=2$ containing an isometric copy of $2^{<\w}$. The simplest example of such a space is the subspace
$S\times\{\vec 0\}\cup \{1\}\times 2^{<\w}$ of the product $S\times 2^{<\w}$.

\begin{problem} Characterize metric spaces $X$ that admit an asymorphic embedding $\Phi:2^{<\w}\Ra X$ of the anti-Cantor set.
\end{problem}
\newpage

\end{document}